\DeclarePairedDelimiter\abs{\lvert}{\rvert}
\DeclarePairedDelimiter\norm{\lVert}{\rVert}
\renewcommand*{\P}{\mathbf{P}}
\newcommand*{\E}{\mathbf{E}}
\newcommand*{\numberset}{\mathbb}
\newcommand*{\N}{\numberset{N}}
\newcommand*{\R}{\numberset{R}}
\newcommand*\de{\mathop{}\!\mathrm{d}}
\newcommand*{\indic}{\bm{1}}
\newcommand{\oneover}[1]{\frac{1}{#1}}
\renewcommand{\epsilon}{\varepsilon}
\renewcommand{\phi}{\varphi}
\theoremstyle{plain}
\newtheorem{theorem}{Theorem}[section]
\newtheorem{lemma}[theorem]{Lemma}
\newtheorem{proposition}[theorem]{Proposition}
\theoremstyle{definition}
\theoremstyle{remark}
\newtheorem{remark}{Remark}
\newcommand*{\B}{\mathbb{B}}
\newcommand*{\vol}{\abs}
\renewcommand{\S}{\mathbb{S}^{n-1}}
\DeclarePairedDelimiterXPP\psinorm[2]{}{\lVert}{\rVert}{_{L^{\psi_2}(\mu_{#1})}}{#2}
\DeclarePairedDelimiterXPP\maxnorm[1]{}{\lVert}{\rVert}{_\infty}{#1}
\newcommand{\xtheta}{\langle x,\theta\rangle}
\newcommand{\dottheta}{\langle \cdot,\theta\rangle}
\title{The isotropic constant of random polytopes\\ with vertices on convex surfaces}
\author{Joscha Prochno, Christoph Th\"ale and Nicola Turchi}
\date{}
\begin{document}
	\maketitle
	
\begin{abstract}
For an isotropic convex body $K\subset\mathbb{R}^n$ we consider the isotropic constant $L_{K_N}$ of the symmetric random polytope $K_N$ generated by $N$ independent random points which are distributed according to the cone probability measure on the boundary of $K$. We show that with overwhelming probability $L_{K_N}\leq C\sqrt{\log(2N/n)}$, where $C\in(0,\infty)$ is an absolute constant. If $K$ is unconditional we argue that even $L_{K_N}\leq C$ with overwhelming probability. The proofs are based on concentration inequalities for sums of sub-exponential or sub-Gaussian random variables, respectively, and, in the unconditional case, on a new $\psi_2$-estimate for linear functionals with respect to the cone measure in the spirit of Bobkov and Nazarov, which might be of independent interest. 

\bigskip

\noindent\textbf{Keywords}. Asymptotic geometric analysis, Bernstein inequality, cone measure, convex body, isotropic constant, random polytope, symmetric convex hull.

\noindent\textbf{MSC 2010}. 46B06, 52A22, 52A23, 52B11, 60D05.
\end{abstract}	

	
\section{Introduction and main results}

\subsection{Motivation, background and overview}

The study of random polytopes began with the work of Sylvester and the famous four-point problem more than 150 years ago \cite{Sylvester}. This problem asks for the probability that four randomly chosen points in a (possibly infinite) planar region have a convex hull which is a quadrilateral. Its solution depends on the probability distribution of the random points and was the starting point for an extensive study. Much later, in their groundbreaking work \cite{RS1963}, R\'enyi and Sulanke continued studying expectations of various basic functionals of random polytopes in the plane. From a methodical point of view, the study of random polytopes combines ideas and techniques from several areas of mathematics such as convex and discrete geometry, geometric functional analysis, or probability theory (see \cite{BaranySurvey,HugSurvey,Reitzner2010} for surveys).

In the last 30 years a tremendous effort has been made to explore various properties of random polytopes as they gained more and more importance due to numerous applications and connections to various other fields. These can be found not only in statistics (in form of extreme points of random samples), computational geometry (when approximating convex sets) or numerical analysis (in the context of numerical integration), but also in computer science in the analysis of the average complexity of algorithms \cite{PS1990} or in optimization \cite{B1987}, when simplex algorithms are considered. A particularly celebrated result due to Spielman and Teng \cite{ST2004} is the smoothed analysis of algorithms, in which one measures the expected complexity of an algorithm under slight random perturbations of arbitrary inputs. They showed that the shadow vertex simplex method has polynomial smoothed complexity and their bounds were later improved by Vershynin in \cite{V2009}. Again, random polytopes play an important r\^ole. Another application of (high-dimensional) polytopes appears when studying graphs with many vertices. In the classical (i.e., deterministic setting), this is related to what is called polyhedral combinatorics, where one usually considers, for instance, the convex hull of the characteristic vectors of the matchings in the graph $G$. The corresponding polytope is then called the matching polytope of $G$. The idea of polyhedral combinatorics (in the case of matchings) would then be to find inequalities describing the matching polytope's facets. This may provide insights into the combinatorial structure of the matchings with often algorithmic consequences. For instance, knowing the facets we can optimize any linear function over the polytope in polynomial time given the facets have a `nice' structure. For more details, we highly recommend the book of Matou\v sek \cite{M2002} and also refer to the references cited therein. 

In 1989, Milman and Pajor \cite{MP} revealed a deep connection between random polytops and geometric functional analysis by proving that the expected volume of a certain random simplex is closely related to the isotropic constant of a convex set. In fact, this is a fundamental quantity in convex geometry and the local theory of Banach spaces \cite{MP}. Very recently, Hinrichs, Prochno and Ullrich discovered an interesting and promising connection of the isotropic constant of the domain of integration and the tractability analysis of high-dimensional numerical integration on that domain for certain classes of smooth functions \cite{HPU2018}.  

Today, the geometry of convex bodies in high dimensions is an active area of current mathematical research and has given birth to a new area, called asymptotic geometric analysis. It lies at the crossroad between convex and discrete geometry, functional analysis, and probability theory. In particular, it has turned out that the presence of high dimensions forces a certain regularity behavior for the involved convex bodies. The goal of this paper is to explore such regularity phenomena further and study volumetric properties (which are related to the isotropic position and the isotropic constant) of a large class of random convex sets.

\medbreak

Let us recall that a convex body $K\subset\R^n$ of unit volume is isotropic if its barycenter is at the origin and its inertia matrix is a constant multiple $L_K^2$ of the identity matrix (all notions will formally be introduced in the next subsection or in \Cref{sec:Preliminaries} below). The constant $L_K$ is the isotropic constant of the body $K$ and the question is whether or not there exists an absolute constant $C\in(0,\infty)$ such that $L_K\leq C$ for all space dimensions $n\in\N$ and all isotropic convex bodies $K\subset\R^n$. While this problem is still open in its general form (the currently best bound is $L_K\leq C\sqrt[4]{n}$, due to Klartag \cite{K06}), the isotropic constant of several special classes of convex bodies is in fact known to be bounded. Examples include zonoids and duals of zonoids \cite{B}, unconditional convex bodies \cite{Bou,KMP} and unit balls of Schatten classes \cite{KMP}. Against this background, Klartag and Kozma \cite{KK} started to investigate the isotropic constant of random convex sets, as it is known since the groundbreaking work of Gluskin on the Banach-Mazur compactum \cite{G} that random constructions often display some kind of extremal behaviour. Their ideas were taken up by Alonso-Guti\'errez \cite{A}, Alonso-Guti\'errez, Litvak and Tomczak-Jaegermann \cite{ALT}, Dafnis, Giannopoulos and Gu\'edon \cite{DGG} and H\"orrmann, Prochno and Th\"ale \cite{HPT17} to prove boundedness of the isotropic constant for several classes of random polytopes with probability tending to $1$, as the space dimension tends to infinity. An entirely different approach was used by H\"orrmann, Hug, Reitzner and Th\"ale \cite{HHRT} for zero cells of a class of Poisson hyperplane tessellations.

The present paper can be regarded as a natural continuation of \cite{A} and \cite{HPT17}, where random polytopes generated by random points on \(\ell_p\text{-spheres}\) have been investigated. Here we take a more general point of view and consider random convex hulls whose points are distributed according to the cone (probability) measure on a convex surface, i.e., on the boundary of an arbitrary (isotropic) convex body $K\subset\R^n$. Against this background our paper can also be regarded as a complement to \cite{ALT,DGG}, where the random points were selected uniformly at random from the interior of $K$. More precisely, we shall prove 
\begin{itemize}
\item[(i)] that the isotropic constant $L_{K_N}$ of a random polytope generated by $n<N<e^{\sqrt{n}}$ independent random points on the boundary of an isotropic convex body $K\subset\R^n$ satisfies
	\begin{equation*}
		L_{K_N} \leq C\sqrt{\log\frac{2N}{n}}
	\end{equation*}
with probability at least $1-c_1e^{-c_2n}-e^{-c_3\sqrt{N}}$ for absolute constants $C,c_1,c_2,c_3\in(0,\infty)$;
\item[(ii)] that if $K$ is in addition symmetric with respect to all coordinate hyperplanes (i.e., if $K$ is unconditional), we even have that
\begin{equation*}
L_{K_N} \leq C
\end{equation*}
with probability bounded below by $1-c_1e^{-c_2n}$ for all $N>n$.
\end{itemize}

The result (i) for general $K$ resembles the so-far best known upper bound for the isotropic constant of random convex hulls in \cite{ALT}, where the generating points were selected with respect to the uniform distribution on $K$. Similarly, our result (ii) is the analogue to the main finding in \cite{DGG}, where boundedness of the isotropic constant of random convex hulls was obtained in the unconditional case. However, we emphasize that as in \cite{A,HPT17} our bounds cannot be concluded from those in the existing literature, since the cone probability measure on the boundary of an isotropic convex body is not log-concave. To study the isotropic constant of random polytopes for which the generating measure is not log-concave was in fact the main source of motivation for this work and its predecessors \cite{A,HPT17}.

\medbreak

The remaining part of the paper is structured as follows. In \Cref{sec:Unconditional} we present our main result for isotropic unconditional convex bodies, while our estimate for the general case is the content of \Cref{sec:General}. Some preliminaries and auxiliary results are collected in \Cref{sec:Preliminaries} and the proofs are presented in \Cref{sec:ProofIsotropicConstant} and \Cref{sec:ProofGeneral}. A new Bobkov-Nazarov \(\psi_2\text{-type}\) estimate for the cone measure, which is the key point in our derivation of the result for isotropic unconditional convex bodies, is presented in \Cref{sec:Psi2}.

\subsection{Main results I -- the unconditional case}
\label{sec:Unconditional}

We work in the $n$-dimensional Euclidean space $\R^n$ and call a compact and convex subset of $\R^n$ with non-empty interior a convex body. A convex body $K$ is said to be isotropic provided that $\vol{K}=1$, its barycenter is at the origin and
	\begin{equation*}
		\int_K\xtheta^2\de x = L_K^2
	\end{equation*}
for all directions $\theta\in\mathbb{S}^{n-1}$, where $L_K$ is a constant independent of $\theta$, the so-called isotropic constant of $K$. Here and in what follows $\vol{\,\cdot\,}$ denotes the ($n$-dimensional) Lebesgue measure and $\langle\,\cdot\,,\cdot\,\rangle$ is the standard scalar product. In addition, we call $K$ unconditional provided that $K$ is symmetric with respect to all $n$ coordinate hyperplanes. In particular, an unconditional convex body is symmetric. If $K\subset\R^n$ is a convex body with boundary $\partial K$ we let $\mu_K$ be the cone probability measure on $\partial K$, i.e.,
	\begin{equation*}
		\mu_K(B)\coloneqq\frac{\vol{\{rx:x\in B,0\leq r\leq 1\}}}{\vol{K}},\qquad B\subset\partial K\text{ a Borel set}.
	\end{equation*}
For $N> n$ we let $X_1,\ldots,X_N$ be independent random points distributed according to $\mu_K$ and $K_N\coloneqq\mathrm{conv}\{\pm X_1,\ldots,\pm X_N\}$ be the random symmetric convex hull generated by $X_1,\ldots,X_N$. Our first main result is the following one.

\begin{theorem}\label{thm:IsotropicConstant}
Let $K\subset\R^n$ be an isotropic unconditional convex body, $N>n$ and $K_N$ the symmetric convex hull of $N$ independent random points on \(\partial K\) with distribution $\mu_K$. Then there exist absolute constants $c_1,c_2,C\in(0,\infty)$ such that the event that
\begin{equation*}
L_{K_N}\leq C
\end{equation*}
occurs with probability at least \(1-c_1e^{-c_2n}\). 
\end{theorem}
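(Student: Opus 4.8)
The plan is to bound $L_{K_N}$ by controlling the two quantities that enter the isotropic constant: the volume of $K_N$ (which should not be too small) and the largest eigenvalue of its inertia matrix (which, after a suitable normalization, should not be too large). Recall that for a symmetric convex body $L$ one has the standard relation $L_L^2 = \vol{L}^{-1-2/n}\,\lambda_{\max}(M_L)$ up to the volume normalization, where $M_L$ is the (unnormalized) inertia matrix; since $K_N \subseteq K$ and $K$ is isotropic, the eigenvalues of the inertia matrix of $K_N$ are automatically $\lesssim L_K^2 \lesssim 1$ once we know $K_N$ contains a ball of the right order. So the entire problem reduces to a \emph{lower bound on $\vol{K_N}$}: it suffices to show that $\vol{K_N} \geq c^n$ for an absolute constant $c > 0$ with probability at least $1 - c_1 e^{-c_2 n}$, and then $L_{K_N} \leq \vol{K_N}^{-1/n}\cdot\sqrt{\lambda_{\max}(M_{K_N})/\vol{K_N}} \leq C$ follows.

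To lower bound the volume, the natural route is to show that $K_N$ contains a large multiple of a fixed reference body — specifically, that $K_N \supseteq c\, K$ (or at least $K_N \supseteq c\,K \cap (\text{something of full measure})$) with high probability. Since $K_N = \mathrm{conv}\{\pm X_1,\dots,\pm X_N\}$, by a standard duality/support-function argument it is enough to prove that $\max_{i\le N} \langle X_i, \theta\rangle \geq c\, h_K(\theta)$ simultaneously for all $\theta$ in a suitable net of $\mathbb{S}^{n-1}$, and then upgrade from the net to all directions by a union bound over an $\epsilon$-net of cardinality $e^{O(n)}$. For a single fixed $\theta$, the key input is the Bobkov–Nazarov-type $\psi_2$-estimate for the cone measure announced in the abstract and proved in \Cref{sec:Psi2}: for unconditional $K$, the linear functional $x \mapsto \langle x,\theta\rangle$ is sub-Gaussian with respect to $\mu_K$ with $\psi_2$-norm of order $\maxnorm{\theta}$ (hence of order $1$ after normalizing so that $\langle X,\theta\rangle$ has variance comparable to $L_K^2 h_K(\theta)^2$-type quantities). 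This two-sided control — a sub-Gaussian upper tail giving that $\langle X_i,\theta\rangle$ is not too large, and a variance/small-ball lower bound giving that it is not too small — lets one apply a concentration inequality (Bernstein/Hoeffding for the sum, or a direct computation for the max of $N$ i.i.d.\ sub-Gaussians) to conclude that with probability $1 - e^{-cn}$ the maximum over $i \le N$ exceeds a constant fraction of $h_K(\theta)$.

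The main obstacle, and the place where unconditionality is genuinely used, is precisely this $\psi_2$-estimate for the cone measure: the cone measure is \emph{not} log-concave, so the classical Bobkov–Nazarov theorem (which handles the uniform measure on an unconditional body) does not apply directly, and one needs the new estimate from \Cref{sec:Psi2} to get the sub-Gaussian behavior with a dimension-free constant. A secondary technical point is handling the passage from a single direction to all directions: the net argument costs a factor $e^{O(n)}$ in the union bound, so the per-direction failure probability must be $e^{-Cn}$ with $C$ large enough to absorb it — this is exactly why a sub-Gaussian (rather than merely sub-exponential) tail is needed here, and it is the reason the unconditional case yields the stronger conclusion $L_{K_N} \leq C$ rather than the $\sqrt{\log(2N/n)}$ bound of the general case. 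Once the volume lower bound $\vol{K_N} \geq c^n$ is in hand, combining it with $K_N \subseteq K$ and the isotropicity of $K$ gives the eigenvalue bound and hence $L_{K_N} \leq C$, completing the proof.
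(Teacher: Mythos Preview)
Your proposal has a genuine gap at its core. The claim that $\vol{K_N}\geq c^n$ for an absolute $c>0$---equivalently, via the support-function argument you sketch, that $K_N\supseteq cK$---is false when $N$ is only polynomial in $n$. Already for $K$ a rescaled Euclidean ball and $N=n^2$, the inradius of $K_N$ is of order $\sqrt{(\log N)/n}$ times the radius of $K$, so $\vol{K_N}^{1/n}\to 0$; the paper's \Cref{lem:volrad} records the correct order $\vol{K_N}^{1/n}\gtrsim\sqrt{\log(2N/n)/n}$. With only this, the crude inertia bound $\lambda_{\max}(M_{K_N})\leq\int_K\langle x,\theta\rangle^2\,\de x=L_K^2$ coming from $K_N\subseteq K$ yields $L_{K_N}^2\lesssim\vol{K_N}^{-1-2/n}L_K^2$, which is exponentially large in $n$. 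The difficulty is structural: both $\vol{K_N}$ and $\int_{K_N}\norm{x}_2^2\,\de x$ shrink as $N$ decreases, and a successful argument must exhibit their cancellation rather than bound each separately by an absolute constant.

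You have also misplaced the role of the $\psi_2$-estimate. It does \emph{not} feed into the volume lower bound; that bound (\Cref{lem:volrad}) comes from a coupling---writing $X_i=Y_i/\norm{Y_i}_K$ with $Y_i$ uniform in $K$ gives $K_N\supseteq\mathrm{conv}\{\pm Y_i\}$ pointwise, and the known estimate for the latter polytope transfers. The $\psi_2$-estimate enters instead in the \emph{numerator}. One combines the $\ell_1$-isotropy inequality $L_{K_N}\leq c\,n^{-1}\vol{K_N}^{-1/n}\cdot\vol{K_N}^{-1}\int_{K_N}\norm{x}_1\,\de x$ (\Cref{lem:boundLK}(a)) with the facet reduction (\Cref{lem:facets}(a)), which leaves the task of bounding $\max\norm{\epsilon_1X_{i_1}+\cdots+\epsilon_nX_{i_n}}_1$ over all signs and all $n$-subsets of vertices. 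Bernstein's inequality applied to the \emph{sums} $\sum_i\langle\epsilon_iX_i,\theta\rangle$, fed the bound $\psinorm{K}{\dottheta}\leq 3\sqrt{n}\maxnorm{\theta}$, shows this maximum is at most $Cn^{3/2}\sqrt{\log(2N/n)}$ with the required probability (\Cref{prop:1normbound}). The factor $\sqrt{\log(2N/n)}$ here cancels exactly against the one in the volume radius, giving $L_{K_N}\leq C$. Thus the sub-Gaussian input is used for concentration of linear combinations of the $X_i$, not for a small-ball estimate on a single $\langle X_i,\theta\rangle$.
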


\begin{remark}\label{rem:N<cn}
We note that the result of \Cref{thm:IsotropicConstant}, in the regime where $N$ is proportional to the space dimension $n$, follows directly from the existing literature. Indeed, in this situation the random polytope $K_N$ has precisely $N$ vertices with probability one and it is known from \cite{ABBW} that an $n$-dimensional polytope $P$ with $v_P> n$ vertices has an isotropic constant bounded above by a constant multiple of $\sqrt{v_P/n}$. This implies absolute boundedness of the isotropic constant of $K_N$ even with probability one.
\end{remark}

Let us emphasize that Theorem \ref{thm:IsotropicConstant} generalizes the main results of both \cite{A} and \cite{HPT17}. Moreover, it is the clear analogue to the main result in \cite{DGG}, where the authors consider random polytopes generated by points $X_1,\ldots,X_N$ chosen uniformly at random from the interior of an isotropic unconditional convex body. However, the result in \cite{DGG} does not imply Theorem \ref{thm:IsotropicConstant} and vice versa. Although the tools we use and the strategy of the proof rely on similar ingredients as those employed in \cite{DGG} (and also that in \cite{A,ALT,KK}) there is a significant difference. In fact, one of the main ingredients in the proof of \Cref{thm:IsotropicConstant} is a version of Bernstein's inequality (see \Cref{lem:bernstein} below). In order to be able to apply it, an upper bound on the so-called $\psi_2$-norm of linear functionals with respect to the cone probability measure is needed. While this is well known in the case of the uniform distribution on $K$, this is not the case for the cone probability measure on $\partial K$, the main reason for this being the fact that the cone measure does not fit into the theory developed for log-concave measures. We shall provide such an estimate in \Cref{sec:Psi2}.

\subsection{Main results II -- the general case}\label{sec:General}

We assume the same set-up as in the previous subsection, but now we drop the assumption that the convex body $K$ is unconditional. That is, we assume that $K\subset\R^n$ is an isotropic convex body with cone probability measure $\mu_K$. The next result is the analogue to the main result in \cite{ALT}, where the authors prove a similar estimate in the case that the random polytope is generated by points uniformly distributed in the interior of $K$. However and in contrast to \Cref{thm:IsotropicConstant}, for general isotropic convex bodies we are (in general) not able to bound the isotropic constant of $K_N$ by an absolute constant with high probability. In addition this set-up requires an upper bound for the number of vertices of $K_N$. Again, this is in line with the results in \cite{ALT}.

\begin{theorem}\label{thm:IsotropicConstantGeneralK}
Let $K\subset\R^n$ be an isotropic convex body, $n<N\leq e^{\sqrt{n}}$ and $K_N$ the symmetric convex hull of $N$ independent random points on \(\partial K\) with distribution $\mu_K$. Then there exist absolute constants $c_1,c_2,c_3,C\in(0,\infty)$ such that the event that 
\begin{equation*}
L_{K_N}\leq C\sqrt{\log\frac{2N}{n}}
\end{equation*}
occurs with probability at least \(1-c_1e^{-c_2n}-e^{-c_3\sqrt{N}}\).
\end{theorem}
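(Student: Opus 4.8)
The starting point is the elementary inequality, valid for every convex body $P\subset\R^n$ with barycenter at the origin,
\[
L_P^2\;\le\;\frac{1}{n\,\vol{P}^{1+2/n}}\int_P|x|^2\de x ,
\]
which follows by applying the arithmetic--geometric mean inequality to the eigenvalues of $\int_P x\otimes x\de x$. Used with $P=K_N$, it reduces \Cref{thm:IsotropicConstantGeneralK} to establishing, outside an event of probability at most $c_1e^{-c_2n}+e^{-c_3\sqrt N}$, a lower bound for $\vol{K_N}$ together with an upper bound for $\int_{K_N}|x|^2\de x$ whose quotient, divided by $n\,\vol{K_N}^{2/n}$, is $\lesssim\log(2N/n)$; concretely I would aim for $\vol{K_N}^{1/n}\gtrsim L_K\sqrt{\log(2N/n)}/\sqrt n$ and $\int_{K_N}|x|^2\de x\lesssim L_K^2\log^2(2N/n)\,\vol{K_N}$, whose combination gives exactly $L_{K_N}^2\lesssim\log(2N/n)$. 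For $n<N\le C_0n$ one invokes \Cref{rem:N<cn} instead, so one may assume $N\ge C_0n$ for a large absolute constant $C_0$. This is the scheme of \cite{A,ALT,HPT17}.

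The analytic input common to both estimates is a sub-exponential bound: for each $\theta\in\S$ the functional $\langle\cdot,\theta\rangle$ lies in $L^{\psi_1}(\mu_K)$ with norm of order $L_K$. For general isotropic $K$ this plays the role that the $\psi_2$-estimate of \Cref{sec:Psi2} plays in the unconditional case; unlike the latter it is available from the existing theory, since $W^{1/n}X$ is uniformly distributed on $K$ whenever $X\sim\mu_K$ and $W$ is uniform on $[0,1]$ and independent of $X$, so that $\langle X,\theta\rangle=W^{-1/n}\langle W^{1/n}X,\theta\rangle$ is a log-concave (hence $\psi_1$) random variable multiplied by the mild, polynomially tailed factor $W^{-1/n}$.

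The volume lower bound I would prove as in \cite{A,HPT17}: fix a sufficiently fine net $\mathcal N\subset\S$ of cardinality $e^{O(n)}$, and for a threshold $t$ of order $L_K$ times a power of $\log(2N/n)$ observe that $\P(\langle X,\theta\rangle\ge t)$ exceeds $n/N$ by enough that Bernstein's inequality (\Cref{lem:bernstein}), applied to the independent indicators $\indic\{\langle X_i,\theta\rangle\ge t\}$, $i\le N$, bounds the event $\{h_{K_N}(\theta)<t\}$ with room for a union bound over $\mathcal N$; together with the complementary estimate $\max_{i\le N}|X_i|\lesssim\sqrt n\,L_K$ --- obtained from Paouris' deviation inequality for the uniform measure on $K$ and the bound $\max_{i\le N}W_i^{-1/n}\lesssim1$ --- this forces $K_N$ to contain a Euclidean ball of radius $\gtrsim t$, and passing to volumes gives the claim. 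It is here that the hypothesis $N\le e^{\sqrt n}$ and the summand $e^{-c_3\sqrt N}$ enter.

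The crucial and most delicate step, which I expect to be the main obstacle, is the second-moment bound, since the trivial estimate $\int_{K_N}|x|^2\de x\le\vol{K_N}\max_{i\le N}|X_i|^2$ only yields $L_{K_N}^2\lesssim n/\log(2N/n)$ and is useless for moderate $N$. One has to exploit that $K_N$, the symmetric hull of merely $2N$ points, is very spiky, so that its uniform measure concentrates at a radius far below the circumradius $\sqrt n\,L_K$. A natural route is to pass to the cone measure on $\partial K_N$ via the identity $\int_{K_N}|x|^2\de x=\tfrac{n}{n+2}\vol{K_N}\,\E_{Z\sim\mu_{K_N}}|Z|^2$, and then to estimate $\E|Z|^2$ by decomposing $K_N$ into the simplicial cones spanned by the origin and its facets: on a facet $\mathrm{conv}\{v_1,\dots,v_n\}$ with $v_1,\dots,v_n$ among $\pm X_1,\dots,\pm X_N$ one writes $|z|^2=\bigl|\sum_k\mu_k v_k\bigr|^2$ with $\mu_k\ge0$ and $\sum_k\mu_k=1$, and quantifies the strong cancellation in these random sums --- direction by direction, using Bernstein's inequality for the independent sub-exponential variables $\langle v_k,\theta\rangle$. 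It is precisely the sub-exponential rather than sub-Gaussian control of these functionals that replaces the single power of $\log(2N/n)$ available in the unconditional setting by $\log^2(2N/n)$, and hence makes the present bound a factor $\sqrt{\log(2N/n)}$ weaker than the boundedness in \Cref{thm:IsotropicConstant}. Combining the volume and second-moment estimates in the backbone inequality then yields $L_{K_N}\le C\sqrt{\log(2N/n)}$ on an event of probability at least $1-c_1e^{-c_2n}-e^{-c_3\sqrt N}$.
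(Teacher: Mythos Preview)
Your overall architecture---bound $L_{K_N}^2$ via $\frac{1}{n\vol{K_N}^{1+2/n}}\int_{K_N}\norm{x}_2^2\de x$, then prove separately a volume lower bound $\vol{K_N}^{1/n}\gtrsim L_K\sqrt{\log(2N/n)/n}$ and a second-moment upper bound $\frac{1}{\vol{K_N}}\int_{K_N}\norm{x}_2^2\de x\lesssim L_K^2\log^2(2N/n)$---is exactly the paper's, and your treatment of the second moment (facet decomposition plus Bernstein for the $\psi_1$ variables $\langle X_i,\theta\rangle$, with a net over $\S$ to pass to the $2$-norm) is essentially what the paper does via \Cref{lem:facets}(b) and \Cref{prop:1NormBoundGeneral}. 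Your derivation of the $\psi_1$-estimate through the probabilistic representation $X=W^{-1/n}Y$ with $Y$ uniform on $K$ is a legitimate alternative to the paper's moment comparison in \Cref{lem:Psi1EstimateGeneral}.

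The one place where your plan diverges substantively, and where it is also weakest, is the volume lower bound. You propose the direct route of \cite{A,HPT17}: for each $\theta$ in a net show $\P(\langle X,\theta\rangle\ge t)\ge n/N$ and conclude $K_N$ contains a ball. That argument hinges on a \emph{lower} tail bound for $\langle X,\theta\rangle$, which in \cite{A,HPT17} comes from explicit knowledge of the $\ell_p$-geometry; for a general isotropic $K$ such reverse estimates are not available from the $\psi_1$-property alone, and getting $t$ of the right order $L_K\sqrt{\log(2N/n)}$ (rather than $L_K\log(2N/n)$, or nothing) is precisely the hard part. The paper sidesteps this entirely by a coupling: write $X_i=Y_i/\norm{Y_i}_K$ with $Y_i$ uniform on $K$, so that $K_N\supseteq\widetilde K_N:=\mathrm{conv}\{\pm Y_1,\ldots,\pm Y_N\}$ deterministically, and then invoke the known bound $\vol{\widetilde K_N}^{1/n}\gtrsim L_K\sqrt{\log(2N/n)/n}$ for $N\le e^{\sqrt n}$ from \cite{DGT1} (which is where the restriction on $N$ and the $e^{-c_3\sqrt N}$ term actually originate). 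This coupling is the one idea your plan is missing; with it the volume step becomes a citation rather than a new argument.
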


\begin{remark}
As in \Cref{rem:N<cn}, if $N\leq cn$ for some $c\in(0,\infty)$ the conclusion of \Cref{thm:IsotropicConstantGeneralK} is again trivial. More precisely, in this regime we even have that $L_{K_N}$ is absolutely bounded with probability one.
\end{remark}

\begin{remark}
Let us point out that in the regime where $e^{\sqrt{n}}<N\leq e^{n}$ one can prove that the weaker estimate
\begin{equation*}
L_{K_N}\leq CL_K\sqrt{\log\frac{2N}{n}}
\end{equation*}
holds with probability exponentially close to $1$. This follows from the fact that one can use part (a) of \Cref{lem:volrad} instead of (b) to lower bound $\vol{K_N}^{1/n}$ in the final proof (see also the discussion after Theorem 11.3.7 in \cite{IsotropicConvexBodies}).
\end{remark}

\begin{remark}
It was shown in \cite{ABBW} that if $P$ is a polytope in $\R^n$ with $f_P$ facets then
\begin{align}\label{eq:IsoGeneralPolytopeFGiven}
L_P\leq C\sqrt{\log\frac{f_P}{n}}
\end{align}
for some absolute constant $C\in(0,\infty)$. Moreover, in \cite{Stemeseder} (see also \cite{BuchtaMuellerTichy} for the case of the unit ball) it is proved that if $\partial K$ is twice differentiable and has positive Gaussian curvature everywhere the expected number of facets of $K_N$ satisfies
\begin{equation*}
\E f_{K_N} = cN(1+o(1)),
\end{equation*}
as $N\to\infty$, where $c\in(0,\infty)$ is some constant depending on $d$ and on $K$, and $o(1)$ is some sequence that tends to zero. (The results in \cite{BuchtaMuellerTichy,Stemeseder} are formulated for the non-symmetric convex hull of $N$ random points in $K$, but it can be checked that the order remains the same for the symmetric convex hulls.) Thus, replacing $f_P$ by $\E f_{K_N}$ in \eqref{eq:IsoGeneralPolytopeFGiven}, the result of \Cref{thm:IsotropicConstantGeneralK} might be anticipated.
\end{remark}

The proof of \Cref{thm:IsotropicConstantGeneralK} is similar to the one of \Cref{thm:IsotropicConstant}, but is based on another version of Bernstein's inequality. More precisely, while in the argument for \Cref{thm:IsotropicConstant} we work with the so-called $\psi_2$-norm (of a certain class of linear functionals), in the context of \Cref{thm:IsotropicConstantGeneralK} we are able to deal only with the $\psi_1$-norm, which can effectively be handled for arbitrary isotropic convex bodies.

\section{Preliminaries}\label{sec:Preliminaries}

In this section we gather some preliminary material which is needed in our arguments below.

	\subsection{Basic notation}
	
	Let \(\N=\{1,2,\ldots\}\) be the set of natural numbers. For \(n\in\N\), we work in the $n$-dimensional Euclidean space $\R^n$ with standard inner product $\langle\,\cdot\,,\cdot\,\rangle$ and induced norm $\norm{\,\cdot\,}_2$. More generally, for $p\in[1,\infty]$ we introduce the $p$-norm of $x=(x_1,\ldots,x_n)\in\R^n$ by putting
	\begin{equation*}
	\norm{x}_p \coloneqq\begin{cases}
	\Bigl(\sum\limits_{i=1}^n\abs{x_i}^p\Bigr)^{1/p} &:\, p<\infty\\
	\max\{\abs{x_1},\ldots,\abs{x_n}\} &:\, p=\infty.
	\end{cases}
	\end{equation*}
	By $\B_p^n$ we denote the unit ball in $\R^n$ with respect to the $p$-norm and we let $\mathbb{S}_p^{n-1}$ be the corresponding $p\text{-sphere}$. For the special case $p=2$, we shall write $\mathbb{S}^{n-1}$ instead of $\mathbb{S}_2^{n-1}$. 
	It is known that $\mu_{\B_p^n}$ coincides with the surface measure on $\mathbb{S}_p^{n-1}$ (i.e., the $(n-1)$-dimensional Hausdorff measure $\mathcal{H}^{n-1}$ on $\mathbb{S}_p^{n-1}$) if and only if $p\in\{1,2,\infty\}$. The total variation distance between these two measures was studied in \cite{N}.
	
	Given sets \(A\subset\R^n\) and \(I\subset[0,\infty)\), we define the set \(IA\subset\R^n\) as 
		\begin{equation*}
			IA\coloneqq\big\{rx\in\R^n:r\in I, x\in A \big\}. 
		\end{equation*}
 	When \(I=\{r\}, r\in[0,\infty) \) we also write \(rA\) instead of \(\{r\}A \). Moreover, \(\mathrm{conv}\,A\) will denote the convex hull of $A$.
 	
	 There exists a norm associated to any symmetric convex body \(K\), called the Minkowski functional of \(K\). It is defined for every \(x\in\R^n \) as
 	\begin{equation*}
 		\norm{x}_K\coloneqq\inf\{r>0: x\in rK \}.
 	\end{equation*}
 	Note, in particular that \(\norm{x}_K=1\) if and only if \(x\in\partial K\).
 	
 	 Within the present paper we use the convention that $C,c,c_1,c_2,\ldots$ denote absolute constants whose value might change across occurrences.
 	
	\subsection{Orlicz spaces and Bernstein's Inequality}
	
	Fix a probability space \((\Omega,\mathcal{F},\P)\). A convex function \(M\colon[0,\infty)\to[0,\infty) \) with \(M(0)=0\) is called an Orlicz function. We indicate by \(L^M(\Omega,\P)\) the set (of equivalence classes) of random variables \(X:\Omega\to\R\) such that \(M(\abs{X}/\lambda)\in L^1(\Omega,\P)\), for some \(\lambda>0 \). Here, $L^1(\Omega,\P)$ stands for the set of (equivalence classes) of integrable random variables on $\Omega$. We supply \(L^M(\Omega,\P)\) with the Luxemburg norm
	\begin{equation*}
		\norm{X}_M\coloneqq\inf\{\lambda>0:\E\,M(\abs{X}/\lambda)\le 1\},
	\end{equation*}
	where $\E$ stands for the expectation (i.e., integration) with respect to $\P$ (this notation should not lead to confusion with the Minkowski $\norm{\,\cdot\,}_K$ functional associated with a convex body $K$). Let us point out that \((L^M(\Omega,\P),\norm{\,\cdot\,}_M)\) is a Banach space and we refer to it as the Orlicz space associated to \(M\). Examples of Orlicz spaces are the \(L^p\text{-spaces}\), for every \(p\in[1,
	\infty)\), associated to the Orlicz functions \(x\mapsto x^p\), and the spaces associated to the functions \(\psi_\alpha(x)=\exp(x^\alpha)-1 \), for every \(\alpha\in[1,
	\infty)\). In the particular case \(\alpha\in\{1,2\} \), the elements of the space \((L^{\psi_\alpha}(\Omega,\P),\norm{\,\cdot\,}_{\psi_\alpha})\) are also called sub-exponential and sub-Gaussian random variables, respectively.
	
	The following result, known as Bernstein's inequality, taken in this form from \cite[Theorem 3.5.17]{AsymGeomAnalyBook}, allows to obtain an estimate on the tail of the distribution of a sum of independent and uniformly sub-Gaussian random variables. It will be used in the proof of \Cref{thm:IsotropicConstant}. In the proof of \Cref{thm:IsotropicConstantGeneralK}, we need another version of Bernstein's inequality, which deals with sums of independent and uniformly sub-exponential random variables. It is written here as a particular case of \cite[Theorem 3.5.16]{AsymGeomAnalyBook}.
	
	\begin{lemma}
		\label{lem:bernstein}
		Let \(Y_1,\ldots,Y_n\) be independent and centred random variables defined on a common probability space \((\Omega,\mathcal{F},\P)\).
		\begin{itemize}
		\item[(a)] Suppose that there exists \(R\in(0,\infty)\) such that \(\norm{Y_i}_{\psi_2}\le R\) for every \(i\in\{1,\ldots, n\} \). Then, for every \(t>0\),
				\begin{equation*}
				\P \Bigl(\abs[\Big]{\sum_{i=1}^n Y_i}>t n\Bigr)\le 2\exp\Bigl(-\frac{t^2 n}{8 R^2} \Bigr).
				\end{equation*}
		\item[(b)] Suppose that there exists \(R\in(0,\infty)\) such that \(\norm{Y_i}_{\psi_1}\le R\) for every \(i\in\{1,\ldots, n\} \). Then, for every \(t>0\),
				\begin{equation*}
				\P \Bigl(\abs[\Big]{\sum_{i=1}^n Y_i}>t n\Bigr)\le 2\exp\Bigl(-\frac{tn}{6R}\min\Bigl\{\frac{t }{R},1\Bigr\} \Bigr).
				\end{equation*}
		\end{itemize}
	\end{lemma}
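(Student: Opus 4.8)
The plan is to prove both parts by the classical Cram\'er--Chernoff exponential-moment method; the only real input is the standard equivalence between the two Orlicz-norm hypotheses and bounds on moment generating functions. Recall that $\norm{Y}_{\psi_\alpha}\le R$ means precisely $\E\exp(\abs{Y}^\alpha/R^\alpha)\le 2$, and that each $Y_i$ is centred.

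\textbf{Part (a).} First I would record the sub-Gaussian moment generating function estimate: for a centred $Y$ with $\norm{Y}_{\psi_2}\le R$ there is an absolute constant $K$ such that $\E\,e^{\lambda Y}\le e^{K\lambda^2 R^2}$ for all $\lambda\in\R$. Starting from $\E\exp(Y^2/R^2)\le 2$, the elementary inequality $\lambda y\le \tfrac{1}{2R^2}y^2+\tfrac12\lambda^2R^2$ combined with H\"older's inequality first produces a bound of the form $\sqrt2\,e^{c\lambda^2R^2}$, and the extraneous prefactor is then removed using $\E Y=0$ by Taylor-expanding $e^{\lambda Y}$ and inserting the moment estimates $\E\abs{Y}^k\le(CR\sqrt k)^k$, which again follow from the $\psi_2$-bound. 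By independence $\E\exp(\lambda\sum_{i=1}^n Y_i)=\prod_{i=1}^n\E e^{\lambda Y_i}\le e^{K\lambda^2R^2 n}$, so Markov's inequality applied to $\exp(\lambda\sum_{i=1}^n Y_i)$ gives, for every $\lambda>0$,
\begin{equation*}
\P\Bigl(\sum_{i=1}^n Y_i>tn\Bigr)\le \exp\bigl(n(-\lambda t+K\lambda^2 R^2)\bigr).
\end{equation*}
Minimising the right-hand side over $\lambda>0$ (the minimiser is $\lambda=t/(2KR^2)$) gives $\exp(-t^2n/(4KR^2))$, which, once the value of $K$ is tracked carefully, is at most $\exp(-t^2n/(8R^2))$; applying the same argument to $-Y_1,\ldots,-Y_n$ and taking a union bound yields the two-sided bound with the factor $2$.

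\textbf{Part (b).} The scheme is the same, but the sub-exponential hypothesis only controls the moment generating function on a bounded range of $\lambda$: for a centred $Y$ with $\norm{Y}_{\psi_1}\le R$ there are absolute constants $K,c$ such that $\E\,e^{\lambda Y}\le e^{K\lambda^2 R^2}$ whenever $\abs{\lambda}\le 1/(cR)$ --- this comes from $\E\exp(\abs{Y}/R)\le2$, equivalently $\E\abs{Y}^k\le k!\,(CR)^k$, again by Taylor expansion and $\E Y=0$. By independence $\E\exp(\lambda\sum_{i=1}^n Y_i)\le e^{K\lambda^2R^2 n}$ on the same range, hence $\P(\sum_{i=1}^n Y_i>tn)\le\exp(n(-\lambda t+K\lambda^2R^2))$ for $0<\lambda\le 1/(cR)$. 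One then optimises in two regimes. If $t\le R$, the unconstrained minimiser $\lambda=t/(2KR^2)$ still lies in the admissible range (for a suitable choice of the absolute constants), and one obtains a Gaussian-type bound $\exp(-ct^2n/R^2)$. If $t>R$, one instead takes $\lambda=1/(cR)$ at the endpoint of the range, which gives an exponential-type bound $\exp(-ctn/R)$. Combining the two cases and keeping track of constants yields $\exp\bigl(-\tfrac{tn}{6R}\min\{t/R,1\}\bigr)$, and the factor $2$ again comes from the two-sided union bound.

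\textbf{The main obstacle} is not conceptual but bookkeeping of absolute constants: one must push the moment-generating-function estimates through with small enough constants so that the exponents come out exactly as $-t^2n/(8R^2)$ in (a) and $-\tfrac{tn}{6R}\min\{t/R,1\}$ in (b). The two delicate points are (i) removing the $\sqrt2$-prefactor in the sub-Gaussian bound, where centredness is essential, and (ii) aligning the threshold between the two $\lambda$-regimes in (b) so that it becomes exactly the $\min\{\,\cdot\,,1\}$. Both statements are available in precisely this form in \cite[Theorems 3.5.16 and 3.5.17]{AsymGeomAnalyBook}, which one may simply cite; the above merely records the standard argument behind them.
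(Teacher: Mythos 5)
Your proposal is fine: the paper itself gives no proof of this lemma at all, but simply quotes it from \cite[Theorems 3.5.16 and 3.5.17]{AsymGeomAnalyBook}, and your Cram\'er--Chernoff sketch (sub-Gaussian, resp.\ restricted-range sub-exponential, moment generating function bounds plus Markov, two regimes of $\lambda$ in part (b), union bound for the two-sided statement) is exactly the standard argument behind those cited results, with the final appeal to the reference matching the paper's own treatment. The only caveat is the one you already flag yourself: obtaining the specific constants $8$ and $\tfrac16\min\{t/R,1\}$ is pure bookkeeping, and since you ultimately cite the same source for them, nothing is missing.
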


	\subsection{Geometry of convex bodies}
	
	After having defined the isotropic constant of an isotropic convex body in the introduction, let us recall that the isotropic constant \(L_K\) can be defined also for an arbitrary convex body \(K\subset\R^n\) as follows:
	\begin{equation}
	\label{eq:defLK}
	L_K^2\coloneqq\min\biggl\{\oneover{n\vol{TK}^{1+2/n}}\int_{z+TK}\norm{x}_2^2\de x : z\in\R^n, T\in\mathrm{GL}(n) \biggr\},
	\end{equation}
	where $\mathrm{GL}(n)$ stands for the group of invertible linear transformations on $\R^n$, see \cite[Definition 10.1.6]{AsymGeomAnalyBook}. Although this definition relies on the $2$-norm, the isotropic constant of a symmetric convex body can be bounded from above using an average of the $1$-norm. As in \cite{DGG} this bound will turn out to be very useful for our purposes. The first of the following inequalities is taken from \cite[Lemma 11.5.2]{IsotropicConvexBodies}, while the second is a direct consequence of the definition of isotropic constant (\Cref{eq:defLK}). We recall that a convex body $K\subset\R^n$ is symmetric provided that $x\in K$ implies $-x\in K$ and centred if $K$ has its barycentre at the origin.
	
	\begin{lemma}\label{lem:boundLK}
	\begin{itemize}
	\item[(a)] Let \(K\subset\R^n\) be a symmetric convex body. Then there exists a  constant \(c\in(0,\infty)\) such that
			\begin{equation*}
			L_K\le\frac{c}{n\vol{K}^{1+1/n}}\int_K\norm{x}_1\de x.
			\end{equation*}
	\item[(b)] Let \(K\subset\R^n\) be a centred convex body. Then,
				\begin{equation*}
						L_K^2\le\oneover{n\vol{K}^{1+2/n}}\int_K\norm{x}_2^2\,\de x.
				\end{equation*}
	\end{itemize}
	\end{lemma}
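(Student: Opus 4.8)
The plan is to establish the two parts separately. Part~(b) is immediate from the variational formula \eqref{eq:defLK}; part~(a), which is \cite[Lemma 11.5.2]{IsotropicConvexBodies}, I would reconstruct by restricting that variational problem to diagonal maps and then invoking a reverse Hölder inequality for one-dimensional marginals.

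\smallskip
\noindent\emph{Part (b).} Specialising \eqref{eq:defLK} to $z=0$ and $T=\mathrm{Id}$ gives at once
\begin{equation*}
L_K^2\le\frac{1}{n\vol{K}^{1+2/n}}\int_K\norm{x}_2^2\,\de x .
\end{equation*}
(For $T=\mathrm{Id}$ the centredness of $K$ in fact makes $z=0$ the optimal shift, since $\int_{z+K}\norm{x}_2^2\,\de x=\int_K\norm{w}_2^2\,\de w+\vol{K}\norm{z}_2^2$, but this is not needed for the inequality.)

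\smallskip
\noindent\emph{Part (a).} I would again start from \eqref{eq:defLK}, but keep only the diagonal maps $D=\mathrm{diag}(\lambda_1,\ldots,\lambda_n)$ with $\lambda_i>0$. With $a_i\coloneqq\int_K x_i^2\,\de x$, and using $\vol{DK}=(\prod_i\lambda_i)\vol{K}$ together with $\int_{DK}\norm{x}_2^2\,\de x=(\prod_i\lambda_i)\sum_i\lambda_i^2 a_i$, this yields
\begin{equation*}
L_K^2\le\min_{\lambda_1,\ldots,\lambda_n>0}\frac{\sum_i\lambda_i^2 a_i}{n\,(\prod_i\lambda_i)^{2/n}\vol{K}^{1+2/n}}=\frac{(\prod_i a_i)^{1/n}}{\vol{K}^{1+2/n}},
\end{equation*}
the last step being AM--GM with optimal weights $\lambda_i^2\propto 1/a_i$. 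It remains to pass from $a_i$ to $\int_K\abs{x_i}\,\de x$. By homogeneity I may assume $\vol{K}=1$, rescaling at the end. Under the uniform probability measure on $K$ each coordinate $x_i$ has a log-concave density (Brunn's theorem), so its $L^2$- and $L^1$-norms are comparable up to an absolute constant, i.e.\ $a_i^{1/2}\le c\int_K\abs{x_i}\,\de x$. Inserting this and applying AM--GM once more,
\begin{equation*}
\Bigl(\prod_i a_i\Bigr)^{1/n}\le c^2\Bigl(\prod_i\int_K\abs{x_i}\,\de x\Bigr)^{2/n}\le c^2\Bigl(\frac1n\sum_i\int_K\abs{x_i}\,\de x\Bigr)^{2}=c^2\Bigl(\frac1n\int_K\norm{x}_1\,\de x\Bigr)^{2},
\end{equation*}
so that $L_K\le\frac cn\int_K\norm{x}_1\,\de x$ when $\vol{K}=1$. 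Undoing the normalisation $K\mapsto K/\vol{K}^{1/n}$, under which $L_K$ is invariant while the right-hand side acquires a factor $\vol{K}^{-1-1/n}$, delivers the stated bound with a new absolute constant.

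\smallskip
The only genuinely nontrivial ingredient is the reverse Hölder inequality $\norm{x_i}_{L^2(K)}\le c\,\norm{x_i}_{L^1(K)}$ for the marginals, which is where log-concavity — and hence an absolute constant — enters; everything else is elementary optimisation. The point that needs care is the homogeneity bookkeeping: since $\norm{\,\cdot\,}_1$ is not invariant under the diagonal rescalings used in \eqref{eq:defLK}, the moment comparison must be carried out after normalising the volume and only then transported back. Of course, one may instead simply invoke \eqref{eq:defLK} for (b) and \cite[Lemma 11.5.2]{IsotropicConvexBodies} for (a), which is presumably the route the paper takes.
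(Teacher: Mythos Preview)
Your proposal is correct, and you have anticipated the paper exactly: it does not give a proof but simply cites \cite[Lemma 11.5.2]{IsotropicConvexBodies} for (a) and says (b) follows directly from the definition \eqref{eq:defLK}. Your reconstruction of (a) via diagonal maps, AM--GM, and the log-concave reverse H\"older inequality for marginals is precisely the argument behind the cited lemma, so nothing is missing.
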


Since we shall be dealing mostly with symmetric polytopes, we will make use of the following lemma that, together with the previous one, allows us to connect the isotropic constant of a polytope with properties of its facets. Part (a) of the next result follows directly from \cite[Lemma 11.5.4]{IsotropicConvexBodies} and \cite[Identity (2.26)]{DGG}, while part (b) is a consequence of \cite[Lemma 11.4.4]{IsotropicConvexBodies} and \cite[Lemma 11.4.5]{IsotropicConvexBodies}.
	
	\begin{lemma}\label{lem:facets}
	\begin{itemize}
	\item[(a)] 		Let \(K\subset\R^n\) be a symmetric polytope. Then
			\begin{equation*}
			\oneover{\abs{K}}\int_{K}\norm{x}_1\de x\le\frac{1+\sqrt{2}}{n}\max_{\substack{{\mathrm{conv}}\{y_1,\ldots,y_n\}\text{\emph{ is a  facet of }} K\\ \epsilon_1,\ldots,\epsilon_n=\pm 1}}\norm{\epsilon_{1}y_{1}+\ldots+\epsilon_{n}y_{n}}_1.
			\end{equation*}
	\item[(b)] 			Let \(K\subset\R^n\) be a symmetric polytope. Then
			\begin{equation*}
			\oneover{\abs{K}}\int_{K}\norm{x}_2^2\,\de x\le\frac{2}{(n+1)(n+2)}\max_{\substack{{\mathrm{conv}}\{y_1,\ldots,y_n\}\text{\emph{ is a  facet of }} K\\ \epsilon_1,\ldots,\epsilon_n=\pm 1}}\norm{\epsilon_{1}y_{1}+\ldots+\epsilon_{n}y_{n}}_2^2.
			\end{equation*}
	\end{itemize}	
	\end{lemma}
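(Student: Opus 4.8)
The plan is to reduce both estimates to one computation on a simplex, by cutting the polytope $K$ into the cones spanned by the origin and its facets. I first assume $K$ is simplicial; the general case follows by triangulating each facet, which only refines the decomposition below and leaves every volume identity unchanged. Since $K$ is symmetric, $0\in\mathrm{int}\,K$, and so $K$ is the essentially disjoint union $\bigcup_F\Delta_F$ over its facets $F=\mathrm{conv}\{y_1,\ldots,y_n\}$, where $\Delta_F\coloneqq\mathrm{conv}\{0,y_1,\ldots,y_n\}$ and "essentially disjoint" means the interiors are pairwise disjoint; in particular $\sum_F\vol{\Delta_F}=\vol{K}$. Writing $M_F^{(1)}\coloneqq\max_{\epsilon_1,\ldots,\epsilon_n=\pm1}\norm{\epsilon_1y_1+\cdots+\epsilon_ny_n}_1$ and $M_F^{(2)}\coloneqq\max_{\epsilon_1,\ldots,\epsilon_n=\pm1}\norm{\epsilon_1y_1+\cdots+\epsilon_ny_n}_2^2$, it then suffices to show, for every facet,
\begin{equation*}
\int_{\Delta_F}\norm{x}_1\de x\le\frac{1+\sqrt2}{n}\,\vol{\Delta_F}\,M_F^{(1)}\qquad\text{and}\qquad\int_{\Delta_F}\norm{x}_2^2\de x\le\frac{2}{(n+1)(n+2)}\,\vol{\Delta_F}\,M_F^{(2)},
\end{equation*}
because summing these over $F$ and bounding $\sum_F\vol{\Delta_F}M_F\le(\max_F M_F)\vol{K}$ gives both parts of the lemma.

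To treat a fixed facet $F$ I would apply the linear change of variables $x=\sum_{i=1}^nt_iy_i$, where $t$ runs over the standard simplex $\Sigma=\{t\in\R^n:t_i\ge0,\ \sum_it_i\le1\}$; its Jacobian is $\abs{\det(y_1,\ldots,y_n)}=n!\,\vol{\Delta_F}$, so the problem reduces to estimating $\int_\Sigma\norm{\sum_it_iy_i}_1\de t$ and $\int_\Sigma\norm{\sum_it_iy_i}_2^2\de t$ in terms of $M_F^{(1)}$ and $M_F^{(2)}$. The ingredients are the elementary Dirichlet moments $\int_\Sigma t_i\de t=1/(n+1)!$, $\int_\Sigma t_i^2\de t=2/(n+2)!$ and $\int_\Sigma t_it_j\de t=1/(n+2)!$ for $i\ne j$, together with three comparisons between expressions in the $y_i$ and the $M_F$: first, $\norm{\sum_iy_i}_1\le M_F^{(1)}$ and $\norm{\sum_iy_i}_2^2\le M_F^{(2)}$ (take all signs $+1$); second, the sign-averaging identity $\sum_i\norm{y_i}_2^2=2^{-n}\sum_\epsilon\norm{\sum_i\epsilon_iy_i}_2^2\le M_F^{(2)}$; and third — needed only for the $\norm{\cdot}_1$ part — the coordinate-wise Khintchine inequality with its sharp constant, which gives $\sum_j\bigl(\sum_i(y_i^{(j)})^2\bigr)^{1/2}\le\sqrt2\cdot2^{-n}\sum_\epsilon\norm{\sum_i\epsilon_iy_i}_1\le\sqrt2\,M_F^{(1)}$, where $y_i^{(j)}$ is the $j$th coordinate of $y_i$.

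For the $\norm{\cdot}_2^2$ estimate the computation is immediate: expanding $\norm{\sum_it_iy_i}_2^2=\sum_{i,k}\langle y_i,y_k\rangle t_it_k$ and integrating against the Dirichlet moments yields $\int_\Sigma\norm{\sum_it_iy_i}_2^2\de t=\frac{1}{(n+2)!}\bigl(\sum_i\norm{y_i}_2^2+\norm{\sum_iy_i}_2^2\bigr)\le\frac{2\,M_F^{(2)}}{(n+2)!}$, and multiplying by $n!\,\vol{\Delta_F}$ produces exactly the factor $2/((n+1)(n+2))$.

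The $\norm{\cdot}_1$ estimate is the delicate one, since the absolute values obstruct a direct second-moment computation, and I expect it to be the main obstacle. Here I would work one coordinate at a time: for fixed $j$, centre the affine function $t\mapsto\sum_it_iy_i^{(j)}$ at its $\Sigma$-mean, bound the centred part in $L^1(\Sigma)$ by its $L^2(\Sigma)$-norm via Cauchy--Schwarz (a variance computation with the same moments bounds that $L^2$-norm by $((n+1)(n+2))^{-1/2}\bigl(\sum_i(y_i^{(j)})^2\bigr)^{1/2}$), and control the constant mean term through $\norm{\sum_iy_i}_1\le M_F^{(1)}$. Summing over $j$ and inserting the Khintchine comparison gives $\int_\Sigma\norm{\sum_it_iy_i}_1\de t\le\frac{1+\sqrt2}{(n+1)\,n!}\,M_F^{(1)}$, hence $\int_{\Delta_F}\norm{x}_1\de x\le\frac{1+\sqrt2}{n+1}\,\vol{\Delta_F}\,M_F^{(1)}$, which is even slightly sharper than the claimed bound with $n$ in the denominator. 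The constant $1+\sqrt2$ appears only after this centring / Cauchy--Schwarz / Khintchine chain is carried out with the exact Dirichlet moments and the optimal Khintchine constant $1/\sqrt2$; one should also verify that the triangulation used for a non-simplicial polytope does not spoil the identity $\sum_F\vol{\Delta_F}=\vol{K}$.
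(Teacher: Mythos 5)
Your argument is correct, and it is in substance a self-contained reconstruction of the black boxes the paper invokes: the paper does not prove \Cref{lem:facets} directly but deduces it from the simplex second-moment identity and sign-comparison lemma of \cite{IsotropicConvexBodies} (Lemmas 11.4.4, 11.4.5), the simplex $\ell_1$-estimate (Lemma 11.5.4), and the decomposition of a symmetric polytope into the cones over its facets from \cite{DGG}. You rebuild precisely these ingredients: the cone decomposition with $\sum_F\vol{\Delta_F}=\vol{K}$, the Dirichlet moments on the standard simplex, which for part (b) give the exact identity $\frac{1}{\vol{\Delta_F}}\int_{\Delta_F}\norm{x}_2^2\de x=\frac{1}{(n+1)(n+2)}\bigl(\sum_i\norm{y_i}_2^2+\norm{\sum_i y_i}_2^2\bigr)$ and then the claimed constant via sign-averaging, and for part (a) the centring/Cauchy--Schwarz/Khintchine chain with the sharp constant $1/\sqrt{2}$, which indeed produces the per-cone bound $\frac{1+\sqrt 2}{n+1}\vol{\Delta_F}M_F^{(1)}$, slightly better than the stated $\frac{1+\sqrt2}{n}$; all the moment computations and the final summation over facets check out. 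What your route buys is explicitness and a marginally sharper constant; what the paper's route buys is brevity, since everything is quoted. One caveat: your remark that the non-simplicial case ``follows by triangulating each facet'' does not literally yield the statement as written, because the simplices $\mathrm{conv}\{y_1,\ldots,y_n\}$ produced by the triangulation are no longer facets of $K$, so they are not among the sets over which the right-hand maximum ranges; the lemma is really a statement about simplicial symmetric polytopes (as $K_N$ is almost surely, which is all the paper needs), and you should either restrict to that case or enlarge the maximum to $n$-point subsets of facets.
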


	\section{A $\psi_2$-estimate for the cone measure}\label{sec:Psi2}
	
	In order to be able to apply Bernstein's inequality for independent and uniformly sub-Gaussian random variables (see \Cref{lem:bernstein} (a)), we need an upper bound on the \(\psi_2\)-norm on linear functionals with respect to the cone probability measure on the boundary of an isotropic unconditional convex body. We emphasize that such an estimate is the key point in the proof of \Cref{thm:IsotropicConstant} and might also be of independent interest. Bounds for the $\psi_2$-norm of linear functionals have been subject of a number of studies, which in turn concentrate on the case of the uniform distribution on an isotropic convex body or, more generally, on an isotropic log-concave measure, see in particular the work of Bobkov and Nazarov \cite{BobkovNazarov}. However, the cone measure does clearly not satisfy this property and it seems that the following Bobkov-Nazarov type estimate for the cone measure result is not available in the existing literature. We also remark that the proof in \cite{BobkovNazarov} does not carry over to the cone measure case. Instead we use the polar integration formula to deduce the estimate from the one for the uniform distribution. In addition, this allows to identify the extremal bodies for which the estimate is sharp, see \Cref{rem:ExtremalBodies}.
	
	\begin{proposition}
		\label{prop:psi2}
		Let \(K\subset\R^n\) be an isotropic unconditional convex body. Then, for every \(\theta\in\R^n\),
		\begin{equation*}
		\psinorm{K}{\dottheta}\le 3\sqrt{n}\maxnorm{\theta}.
		\end{equation*}
	\end{proposition}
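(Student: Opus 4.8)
The plan is to reduce the claimed bound to the corresponding $\psi_2$-estimate for the \emph{uniform} distribution on $K$ and to invoke there the Bobkov--Nazarov circle of ideas for isotropic unconditional bodies. The bridge between $\mu_K$ and the uniform measure on $K$ is the polar integration formula, and the crucial feature is that this bridge is an \emph{exact} identity at the level of the (even) moments of a linear functional, so essentially nothing is wasted in the reduction itself.

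Concretely, since $K$ is isotropic we have $\vol{K}=1$, and for every nonnegative measurable $f$ supported on $K$ the polar integration formula for the cone measure gives
\[
\int_K f(x)\de x=n\int_{\partial K}\int_0^1 f(ry)\,r^{n-1}\de r\,\de\mu_K(y).
\]
Taking $f(x)=\xtheta^{2k}$ and using the homogeneity $\langle ry,\theta\rangle^{2k}=r^{2k}\langle y,\theta\rangle^{2k}$ for $y\in\partial K$, the radial integral evaluates to $\int_0^1 r^{2k+n-1}\de r=(2k+n)^{-1}$, so that for every integer $k\ge 0$,
\[
\int_{\partial K}\langle y,\theta\rangle^{2k}\de\mu_K(y)=\frac{2k+n}{n}\int_K\xtheta^{2k}\de x .
\]

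Now I would expand the exponential and insert this identity:
\begin{align*}
\int_{\partial K}\exp\!\Bigl(\frac{\langle y,\theta\rangle^2}{\lambda^2}\Bigr)\de\mu_K(y)
&=\sum_{k\ge 0}\frac{1}{k!\,\lambda^{2k}}\int_{\partial K}\langle y,\theta\rangle^{2k}\de\mu_K(y)
=\sum_{k\ge 0}\frac{1}{k!\,\lambda^{2k}}\Bigl(1+\frac{2k}{n}\Bigr)\int_K\xtheta^{2k}\de x\\
&\le\sum_{k\ge 0}\frac{3^k}{k!\,\lambda^{2k}}\int_K\xtheta^{2k}\de x
=\int_K\exp\!\Bigl(\frac{\xtheta^2}{(\lambda/\sqrt3)^2}\Bigr)\de x ,
\end{align*}
where I used the elementary bound $1+\tfrac{2k}{n}\le 1+2k\le 3^k$, valid for every integer $k\ge 0$ (and this is not wasteful: the binding index is $k=1$, where already $1+2/n\le 3$). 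Reading off Luxemburg norms (recall $\norm{X}_{\psi_2}=\inf\{\lambda>0:\E\exp(X^2/\lambda^2)\le 2\}$), this yields
\[
\psinorm{K}{\dottheta}\le\sqrt3\;\norm{\dottheta}_{L^{\psi_2}(K)},
\]
the right-hand norm being taken with respect to the uniform probability measure on $K$. Observe that this reduction uses nothing about $K$ beyond $\vol{K}=1$, and that the moment step is an \emph{equality}; this is precisely what makes it possible to track the bodies for which the estimate is sharp through the argument (see \Cref{rem:ExtremalBodies}).

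It then remains to supply the estimate $\norm{\dottheta}_{L^{\psi_2}(K)}\le C_0\sqrt n\,\maxnorm{\theta}$ for the uniform distribution on an isotropic \emph{unconditional} convex body, with $C_0$ an absolute constant; combined with the above this gives $\psinorm{K}{\dottheta}\le\sqrt3\,C_0\sqrt n\,\maxnorm{\theta}$, and keeping track of the numerology (the only loss is the factor $\sqrt3$, improvable to $\sqrt2$ when $n\ge 2$) gives the stated constant $3$. This unconditional input is where unconditionality is genuinely used and is the main obstacle: writing $X$ for a uniform point in $K$, unconditionality makes $X$ equidistributed with $(\epsilon_1\abs{X_1},\dots,\epsilon_n\abs{X_n})$ for an independent Rademacher sequence $(\epsilon_i)$, so conditionally on $(\abs{X_1},\dots,\abs{X_n})$ the functional $\xtheta=\sum_i\epsilon_i\theta_i\abs{X_i}$ is a Rademacher average, hence sub-Gaussian with parameter $\bigl(\sum_i\theta_i^2X_i^2\bigr)^{1/2}\le\maxnorm{\theta}\,\norm{X}_2$. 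The delicate point is to integrate out $(\abs{X_1},\dots,\abs{X_n})$: one cannot bound $\norm{X}_2$ by $\sqrt n$ pointwise but only in mean square ($\int_K\norm{x}_2^2\de x=nL_K^2$, with $L_K$ absolutely bounded in the unconditional case) and with a sub-exponential tail, so passing from the conditional Rademacher estimate to an unconditional $\psi_2$-bound with a clean absolute constant is exactly what the quantitative structure theory of unconditional isotropic bodies of Bobkov and Nazarov \cite{BobkovNazarov} provides (alternatively one reruns that argument, applying the same polar-integration device once more to $\norm{X}_2$).
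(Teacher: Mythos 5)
Your reduction from the cone measure to the uniform measure is correct and is, in fact, half of the paper's own argument: the identity $\int_{\partial K}\abs{\langle y,\theta\rangle}^{2k}\de\mu_K(y)=\frac{n+2k}{n}\int_K\abs{\xtheta}^{2k}\de x$ is exactly \eqref{eq:coefficient}, and the elementary bound $1+\tfrac{2k}{n}\le 3^k$ legitimately yields $\psinorm{K}{\dottheta}\le\sqrt{3}\,\norm{\dottheta}_{L^{\psi_2}(\nu_K)}$. The gap is in the final step. You invoke a uniform-measure Bobkov--Nazarov estimate $\norm{\dottheta}_{L^{\psi_2}(\nu_K)}\le C_0\sqrt{n}\maxnorm{\theta}$ as a black box and assert that ``keeping track of the numerology'' recovers the stated constant $3$; that would require $C_0\le\sqrt{3}$ (or $3/\sqrt{2}$ with your improved factor), which you neither prove nor can quote. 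The explicit-constant form of Bobkov--Nazarov that is actually available -- comparison of moments with $V=\frac{\sqrt{6}}{2}n\B_1^n$ followed by the moment computation on the cross-polytope, i.e.\ essentially \Cref{lem:psi2cB1,lem:psi2B1} -- gives $C_0=3$ for the uniform measure, so your route delivers $3\sqrt{3}\,\sqrt{n}\maxnorm{\theta}$, not $3\sqrt{n}\maxnorm{\theta}$. Your closing sketch of how one might prove the uniform bound (conditioning on signs, Rademacher sub-Gaussianity with parameter $\maxnorm{\theta}\norm{X}_2$, then integrating out $\norm{X}_2$) is, as you acknowledge, the main obstacle and is not carried out; citing \cite{BobkovNazarov} is fine for ``some absolute constant'' but not for the constant in the statement.

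The paper avoids this loss by reversing the order of the two steps. Since the polar factor $n/(n+2q)$ in \eqref{eq:coefficient} is the same for every body, the Bobkov--Nazarov moment comparison $\int\abs{\xtheta}^{2q}\de\nu_K\le\int\abs{\xtheta}^{2q}\de\nu_V$ transfers verbatim to the cone measures, giving $\psinorm{K}{\dottheta}\le\psinorm{V}{\dottheta}$ with no multiplicative loss (\Cref{lem:comparison}); the constant then comes from the explicit computation of the cone-measure $\psi_2$-norm on $\partial V$ (\Cref{lem:psi2cB1,lem:psi2B1}), which gives exactly $3\sqrt{n}\maxnorm{\theta}$. So your argument does prove the proposition up to the value of the absolute constant -- which would suffice for the applications in \Cref{sec:ProofIsotropicConstant} after adjusting the numerical constants there -- but as written it does not establish the inequality with constant $3$. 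To repair it you would either have to carry out the extremal-body computation anyway (at which point it is no harder, and cleaner, to do it directly for the cone measure as the paper does) or locate a uniform-measure $\psi_2$ estimate with constant at most $\sqrt{3}$, which is not what the standard references provide.
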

	
	Let us briefly comment that the result of \Cref{prop:psi2} might be re-phrased by saying that for an isotropic unconditional convex body $K$ and for every $\theta\in\R^n$ one has that
	\begin{equation*}
	\mu_K(\{x\in\partial K:\abs{\xtheta}\geq t\sqrt{n}\norm{\theta}_\infty\}) \leq 2e^{-\frac{t^2}{9}}
	\end{equation*}
	for all $t>0$. Especially, taking $\theta=(1,\ldots,1)$, which satisfies $\norm{\theta}_\infty=1$, we have that
	\begin{equation*}
	\mu_K\Bigl(\Bigl\{x\in\partial K:\frac{\norm{x}_1}{\sqrt{n}}\geq t\Bigr\}\Bigr) \leq 2e^{-\frac{t^2}{9}}.
	\end{equation*}
	We split the proof of \Cref{prop:psi2} into three lemmas, the first being a comparison inequality, in the spirit of \cite{BobkovNazarov}, where we compare the absolute moments of a linear functional on a general isotropic unconditional convex body to the ones on a rescaling of the unit ball $\B_1^n$.
	
	\begin{lemma}
		\label{lem:comparison}
		Let \(K\subset\R^n\) be an isotropic unconditional convex body and \(V\coloneqq\frac{\sqrt{6}}{2}n\B_1^n\). Then, for any \(\theta\in\R^n\) and every \(q\in\N\cup\{0\}\),
		\begin{equation*}
		\int_{\partial K}\abs{\langle x,\theta\rangle}^{2q}\de\mu_K(x)\le 	\int_{\partial V}\abs{\langle x,\theta\rangle}^{2q}\de\mu_V(x).
		\end{equation*}
	\end{lemma}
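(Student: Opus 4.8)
The plan is to prove the moment comparison by expressing both sides via the polar integration formula, reducing the problem on $\partial K$ to an integral over $K$ against the $2$-norm weight, and then invoking a Bobkov--Nazarov type comparison between $K$ and a multiple of $\B_1^n$. Concretely, the polar (or cone) integration formula states that for a convex body $K$ and a homogeneous-of-degree-$k$ integrand, $\int_{\partial K} f(x)\,\de\mu_K(x)$ is proportional to $\int_K f(x/\norm{x}_2)\,\de x$ up to a factor coming from the radial function; more usefully here, for the cone measure one has the identity $\int_K g(x)\,\de x = n\vol{K}\int_{\partial K}\int_0^1 g(rx)\,r^{n-1}\,\de r\,\de\mu_K(x)$. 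Applying this to $g(x)=\abs{\xtheta}^{2q}$ gives $\int_K\abs{\xtheta}^{2q}\de x = \frac{n}{n+2q}\vol{K}\int_{\partial K}\abs{\xtheta}^{2q}\de\mu_K(x)$, since $\vol{K}=1$ for an isotropic body and $\int_0^1 r^{2q+n-1}\de r = \frac{1}{n+2q}$. Thus $\int_{\partial K}\abs{\xtheta}^{2q}\de\mu_K(x) = \frac{n+2q}{n}\int_K\abs{\xtheta}^{2q}\de x$, and the same identity holds with $K$ replaced by $V$ provided $V$ is also isotropic (which is why $V=\frac{\sqrt6}{2}n\B_1^n$ is the specific normalisation of $\B_1^n$ that is in isotropic position). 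Hence the claimed inequality is equivalent to $\int_K\abs{\xtheta}^{2q}\de x \le \int_V\abs{\xtheta}^{2q}\de x$.

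First I would verify that $V=\frac{\sqrt6}{2}n\B_1^n$ is indeed isotropic: $\B_1^n$ has volume $2^n/n!$, its barycentre is the origin, and a short computation of $\int_{\B_1^n}x_1^2\de x$ together with the scaling needed to achieve unit volume pins down the constant $\frac{\sqrt6}{2}n$ (this is classical; the isotropic constant of $\B_1^n$ is a known explicit number). Then, for the main inequality $\int_K\abs{\xtheta}^{2q}\de x \le \int_V\abs{\xtheta}^{2q}\de x$, I would use the Bobkov--Nazarov comparison: for an isotropic \emph{unconditional} convex body $K$, the normalised body $K$ is contained, after the coordinatewise symmetrisation argument, in a controlled multiple of the cross-polytope in the sense that moments of linear functionals are dominated. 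More precisely, Bobkov and Nazarov showed that for an isotropic unconditional body $K$ one has $K\supseteq c_1\B_\infty^n$-type containments and, dually, that linear functional moments against $K$ are bounded by those against a suitable multiple of $\B_1^n$; the cleanest route is the known fact that among isotropic unconditional bodies, $\frac{\sqrt6}{2}n\B_1^n$ maximises $\int\abs{\xtheta}^{2q}$ for each fixed $\theta$ and $q$. I would cite or reprove this via the fact that for unconditional $K$, the function $t\mapsto \vol{\{x\in K:\abs{\xtheta}\ge t\}}$ is controlled by the corresponding quantity for the cross-polytope, using that an unconditional isotropic body must contain a fixed-size box, hence its one-dimensional marginals in the direction $\theta$ have a density that is log-concave and suitably normalised — and the extremal marginal (largest tails under the isotropic normalisation, for the specific power being integrated) is exactly that of the cross-polytope.

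The main obstacle is establishing the comparison $\int_K\abs{\xtheta}^{2q}\de x \le \int_V\abs{\xtheta}^{2q}\de x$ cleanly for \emph{all} $q$ and \emph{all} $\theta$ simultaneously; this is really a statement that the cross-polytope is extremal within the class of isotropic unconditional bodies for these moments, which is in the spirit of Bobkov--Nazarov but needs care because naive inclusion $K\subseteq V$ is false. The key technical input I expect to lean on is that for an unconditional isotropic convex body the marginal density $f_\theta$ of $\xtheta$ satisfies a pointwise bound forcing its even moments below those of the cross-polytope's marginal; the unconditionality is essential (it guarantees that $K$ contains a cube of side comparable to $1$, equivalently that $\norm{\theta}_\infty$ rather than $\norm{\theta}_2$ is the relevant scale, which is exactly what appears in \Cref{prop:psi2}). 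Once this moment comparison is in hand, the polar integration identities above convert it directly into the statement of the lemma, and summing the resulting moment bounds in $q$ (as a power series, after dividing by $\lambda^{2q}(2q)!$ or similar) will feed into the $\psi_2$-estimate of \Cref{prop:psi2}.
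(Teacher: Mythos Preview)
Your approach is exactly the paper's: reduce the cone-measure moment comparison to a uniform-measure moment comparison via the polar integration formula, and then invoke the Bobkov--Nazarov inequality for the uniform measures. The paper derives the identity $\int_{\partial K_0}\abs{\xtheta}^{2q}\de\mu_{K_0}(x)=\frac{n+2q}{n}\int\abs{\xtheta}^{2q}\de\nu_{K_0}(x)$ for \emph{any} symmetric convex body $K_0$ (with $\nu_{K_0}$ the normalised uniform measure) and then simply cites Bobkov--Nazarov for $\int\abs{\xtheta}^{2q}\de\nu_K\le\int\abs{\xtheta}^{2q}\de\nu_V$.

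One correction, though: $V=\frac{\sqrt6}{2}n\B_1^n$ is \emph{not} in isotropic position and does not have unit volume (already for $n=2$ one has $\vol{V}=12$); the constant $\frac{\sqrt6}{2}$ comes from the Bobkov--Nazarov comparison itself, not from any volume normalisation. Consequently your reduction should not be to the unnormalised inequality $\int_K\abs{\xtheta}^{2q}\de x\le\int_V\abs{\xtheta}^{2q}\de x$ but to the normalised version $\int\abs{\xtheta}^{2q}\de\nu_K\le\int\abs{\xtheta}^{2q}\de\nu_V$. This is harmless: the polar formula yields the \emph{same} prefactor $\frac{n+2q}{n}$ against $\nu_{K_0}$ regardless of $\vol{K_0}$, so once you phrase everything in terms of $\nu$ the argument goes through verbatim. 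There is also no need to reprove the Bobkov--Nazarov moment comparison as you sketch in your last two paragraphs; the paper simply cites it.
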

	\begin{proof}
		We know from the computations in \cite{BobkovNazarov} (see in particular \cite[page 307]{IsotropicConvexBodies}) that, for any \(\theta\in\R^n\),
		\begin{equation}
		\label{eq:1}
		\int_{\R^n}\abs{\xtheta}^{2q}\de\nu_K(x)\le 	\int_{\R^n}\abs{\xtheta}^{2q}\de\nu_V(x),
		\end{equation}
		since $K$ is unconditional. Then the claim holds if for any symmetric convex body \(K_0\),
		\begin{equation}
		\label{eq:2}
		\int_{\R^n}\abs{\xtheta}^{2q}\de\nu_{K_0}(x)=c_{n,q} 	\int_{\partial K_0}\abs{\langle x,\theta\rangle}^{2q}\de\mu_{K_0}(x),
		\end{equation}
		where \(c_{n,q}\in(0,\infty)\) can depend on \(n,q\) but not \(K_0\).
		We can prove \Cref{eq:2} using a polar integration formula for the cone measure. It says that, for every integrable function \(f:\R^n\to\R\),
		\begin{equation*}
		\int_{\R^n} f(x)\de x=n\abs{K_0}\int_0^\infty r^{n-1}\int_{\partial K_0} f(rx)\de\mu_{K_0}(x)\de r,
		\end{equation*}
		see \cite[Proposition 1]{NR2002}.
		We apply this transformation formula to \(f(x)=\indic_{K_0}(x)\abs{\xtheta}^{2q}\). Then, we get
		\begin{equation}
		\label{eq:coefficient}
		\begin{split}
		\int_{\R^n}\abs{\xtheta}^{2q}\de\nu_{K_0}(x)&=	\int_{\R^n}\abs{\xtheta}^{2q}\frac{\indic_{K_0}(x)}{\abs{K_0}}\de x\\
		&=n\int_0^\infty r^{n-1}\int_{\partial K_0}\abs{\langle rx,\theta\rangle}^{2q}\indic_{K_0}(rx) \de\mu_{K_0}(x)\de r\\
		&=n\int_0^\infty r^{n-1+2q}\int_{\partial K_0}\abs{\langle x,\theta\rangle}^{2q}\indic_{[0,1]}(r) \de\mu_{K_0}(x)\de r\\
		&=n\int_0^1 r^{n-1+2q}\int_{\partial K_0} \abs{\langle x,\theta\rangle}^{2q}\de\mu_{K_0}(x)\de r\\
		&=\frac{n}{n+2q}\int_{\partial {K_0}} \abs{\langle x,\theta\rangle}^{2q}\de\mu_{K_0}(x),
		\end{split}
		\end{equation}
		which completes the argument.
	\end{proof}
	\begin{remark}
		The quantitative dependence of the constant \(c_{n,q}=n/(n+2q)\) on \(n\) and \(q\) is of importance on its own. This will become clear in the proof of \Cref{lem:psi2B1}.
	\end{remark}
	\begin{lemma}
		\label{lem:psi2cB1}
		For every \(c\in(0,\infty)\),
		\begin{equation}
		\psinorm{c\B_1^n}{\dottheta}=c\psinorm{\B_1^n}{\dottheta}.
		\end{equation}
	\end{lemma}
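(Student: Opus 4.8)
The plan is to deduce the identity from the behaviour of the cone measure under dilations, combined with the $1$-homogeneity of the Luxemburg norm in its scaling parameter. In fact the same computation shows that the stated equality is a general homogeneity property: it holds with $\B_1^n$ replaced by any symmetric convex body and with $\psi_2$ replaced by any Orlicz function.

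First I would record the elementary fact that for any symmetric convex body $K_0\subset\R^n$ and any $c\in(0,\infty)$ the cone measure $\mu_{cK_0}$ is the push-forward of $\mu_{K_0}$ under the dilation $x\mapsto cx$. This is immediate from the definition: since $\partial(cK_0)=c\,\partial K_0$, for a Borel set $B\subset\partial K_0$ the cone $\{rx:x\in cB,\ 0\le r\le 1\}$ is precisely $c$ times the cone $\{rx:x\in B,\ 0\le r\le 1\}$, so the $n$-homogeneity of Lebesgue measure gives $\vol{\{rx:x\in cB,\ 0\le r\le 1\}}=c^n\vol{\{rx:x\in B,\ 0\le r\le 1\}}$ and $\vol{cK_0}=c^n\vol{K_0}$; the factors $c^n$ cancel, yielding $\mu_{cK_0}(cB)=\mu_{K_0}(B)$.

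Next I would insert this into the definition of the Luxemburg norm. Using the change of variables just described together with the linearity $\langle cx,\theta\rangle=c\xtheta$,
\begin{align*}
\psinorm{cK_0}{\dottheta}
&=\inf\Bigl\{\lambda>0:\int_{\partial(cK_0)}\psi_2\bigl(\abs{\xtheta}/\lambda\bigr)\de\mu_{cK_0}(x)\le 1\Bigr\}\\
&=\inf\Bigl\{\lambda>0:\int_{\partial K_0}\psi_2\bigl(c\abs{\xtheta}/\lambda\bigr)\de\mu_{K_0}(x)\le 1\Bigr\}.
\end{align*}
The substitution $\lambda=c\mu$ turns the constraint into $\int_{\partial K_0}\psi_2(\abs{\xtheta}/\mu)\de\mu_{K_0}(x)\le 1$ and factors a $c$ out of the infimum, so the right-hand side equals $c\,\psinorm{K_0}{\dottheta}$. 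Specialising to $K_0=\B_1^n$ gives the lemma. I do not expect any genuine obstacle here; the only step needing a word of justification is the dilation-equivariance of the cone measure, which, as noted, follows at once from the $n$-homogeneity of Lebesgue measure.
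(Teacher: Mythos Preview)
Your proof is correct. It differs from the paper's argument in a small but noteworthy way: the paper invokes the special fact that for $\B_1^n$ the cone measure coincides with the normalized $(n-1)$-dimensional Hausdorff measure on $\mathbb{S}_1^{n-1}$, and then uses the $(n-1)$-homogeneity of $\mathcal{H}^{n-1}$ under dilations to pass from $c\,\mathbb{S}_1^{n-1}$ back to $\mathbb{S}_1^{n-1}$. You instead work straight from the definition of the cone measure as a Lebesgue-volume ratio and use the $n$-homogeneity of Lebesgue measure, which shows directly that $\mu_{cK_0}$ is the push-forward of $\mu_{K_0}$ under $x\mapsto cx$ for \emph{any} symmetric convex body $K_0$. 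Your route is therefore both more elementary (it avoids the cone-measure $=$ Hausdorff-measure identification, which is special to $p\in\{1,2,\infty\}$) and more general (it yields the homogeneity $\psinorm{cK_0}{\dottheta}=c\,\psinorm{K_0}{\dottheta}$ for arbitrary $K_0$ and arbitrary Orlicz functions). The paper's approach, on the other hand, makes the connection to surface measure explicit, which may be conceptually appealing in the $\B_1^n$ context even if it is not needed here.
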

	\begin{proof}
		It is well known that $\mu_{c\B_1^n}$ coincides with the normalized $(n-1)$ Hausdorff measure $\mathcal{H}^{n-1}$ on $c\B_1^n$, see \cite{RR91}. Then, for \(t>0\) large enough,
		\begin{align*}
		\int_{c\,\mathbb{S}_1^{n-1}}\exp\bigl((\xtheta/t)^2\bigr)\de\mu_{c\B_1^n}(x)&=\frac{1}{\mathcal{H}^{n-1}(c\B_1^n)}	\int_{c\,\mathbb{S}_1^{n-1}}\exp\bigl((\xtheta/t)^2\bigr)\de\mathcal{H}^{n-1}(x)\\
		&=\frac{c^{n-1}}{\mathcal{H}^{n-1}(c\B_1^n)}	\int_{ \mathbb{S}_1^{n-1}}\exp\bigl((c\langle x',\theta\rangle/t)^2\bigr)\de\mathcal{H}^{n-1}(x')\\
		&=\int_{ \mathbb{S}_1^{n-1}}\exp\bigl((c\langle x',\theta\rangle/t)^2\bigr)\de\mu_{\B_1^n}(x')\,,
		\end{align*}
		where we used that $\mathcal{H}^{n-1}$ is homogeneous of degree $n-1$ in the last step.
		This implies the claim by definition of the $\psi_2$-norm.
	\end{proof}
	\begin{lemma}
		\label{lem:psi2B1}
		For every \(\theta\in\R^n\),
		\begin{equation*}
		\sqrt{n}\psinorm{\B_1^n}{\dottheta}\le \sqrt{6}\maxnorm{\theta}.
		\end{equation*}
	\end{lemma}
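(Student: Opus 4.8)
The plan is to estimate the $\psi_2$-norm of $x\mapsto\langle x,\theta\rangle$ under $\mu_{\mathbb{B}_1^n}$ by controlling its even moments and then summing the exponential series. First I would record that, since $\mu_{\mathbb{B}_1^n}$ is the normalized Hausdorff measure on $\mathbb{S}_1^{n-1}$ and it is symmetric and unconditional, the odd moments vanish and it suffices to bound $m_{2q}:=\int_{\partial\mathbb{B}_1^n}\abs{\langle x,\theta\rangle}^{2q}\de\mu_{\mathbb{B}_1^n}(x)$. To do this I would invoke \Cref{eq:2}/\eqref{eq:coefficient} in reverse: applying the polar integration identity from \Cref{lem:comparison} with $K_0=\mathbb{B}_1^n$ gives
\begin{equation*}
m_{2q}=\frac{n+2q}{n}\int_{\R^n}\abs{\xtheta}^{2q}\de\nu_{\mathbb{B}_1^n}(x),
\end{equation*}
so everything reduces to the \emph{volume}-distribution moments of a linear functional on the (suitably normalized) cross-polytope. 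Those are classical: for the probability measure $\nu_{\mathbb{B}_1^n}$ (uniform on the cross-polytope of volume $1$, i.e.\ on $c_n\mathbb{B}_1^n$ with $c_n=(n!)^{1/n}/2\asymp n/e$) one has an explicit formula, and in any case the bound $\int\abs{\xtheta}^{2q}\de\nu_{\mathbb{B}_1^n}\le (C q/n)^{2q}\norm{\theta}_\infty^{2q}$ for an absolute constant is standard (it expresses that a linear functional on the normalized cross-polytope is $\psi_1$ with constant $\asymp \norm{\theta}_\infty/n$). Combining, $m_{2q}\le \frac{n+2q}{n}(Cq/n)^{2q}\norm{\theta}_\infty^{2q}$.

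Next I would turn the moment bound into the $\psi_2$-estimate. Using $\abs{\langle x,\theta\rangle}\le n\norm{\theta}_\infty$ on $\mathbb{S}_1^{n-1}$ (the support is the $\ell_1$-sphere of radius $1$), the small-$q$ terms are harmless, and for the tail one writes
\begin{equation*}
\int_{\partial\mathbb{B}_1^n}\exp\!\Big(\tfrac{\langle x,\theta\rangle^2}{t^2}\Big)\de\mu_{\mathbb{B}_1^n}(x)=\sum_{q\ge0}\frac{m_{2q}}{q!\,t^{2q}}\le\sum_{q\ge0}\frac{n+2q}{n}\cdot\frac{(Cq/n)^{2q}\norm{\theta}_\infty^{2q}}{q!\,t^{2q}}.
\end{equation*}
With the choice $t=\sqrt{6}\,\norm{\theta}_\infty/\sqrt{n}$ one gets summand $\lesssim \frac{n+2q}{n}\,(C'q/n^{1/2})^{2q}(n^{1/2})^{-2q}/q!\asymp \frac{n+2q}{n}(C'')^{2q}q^{2q}/(n^{2q}q!)$; Stirling ($q!\ge (q/e)^q$) turns $q^{2q}/q!$ into at most $(eq)^q$, and the resulting geometric-type series in $q/n$ sums to something $\le1$ for $n$ beyond an absolute threshold, which (after adjusting the constant $\sqrt{6}$ and absorbing the finitely many small dimensions, where the trivial bound $\abs{\langle x,\theta\rangle}\le n\norm{\theta}_\infty$ together with $n$ bounded already gives a $\psi_2$-bound of the required form) yields $\sqrt{n}\,\psinorm{\mathbb{B}_1^n}{\dottheta}\le\sqrt6\,\maxnorm{\theta}$.

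The main obstacle is the bookkeeping of constants: one has to track the normalization $c_n=(n!)^{1/n}/2$ of the unit-volume cross-polytope (equivalently, that $\mathbb{B}_1^n$ itself does not have volume $1$, so $\nu_{\mathbb{B}_1^n}$ in the sense of \Cref{lem:comparison} already refers to the normalized body), verify that the explicit even moments of $\langle\cdot,\theta\rangle$ on the cross-polytope combine with the factor $(n+2q)/n$ to give a clean bound, and then check that the constant coming out of the Stirling estimate on $\sum q^{2q}/(q!\,(n\,t\,/\,\norm\theta_\infty)^{2q})$ is genuinely $\le1$ for the stated $t$ — i.e.\ that $\sqrt6$ is large enough — rather than merely for $t$ equal to some unspecified absolute constant over $\sqrt n$. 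I expect the cleanest route is to use the exact one-dimensional marginal of the uniform distribution on $\mathbb{B}_1^n$ (a Beta-type density), for which $\int\abs{\langle x,e_1\rangle}^{2q}\de\nu_{\mathbb{B}_1^n}$ has a closed form in terms of Gamma functions, reducing the whole estimate to an elementary inequality between Gamma factors; extremality in the direction $\theta=e_i$ is exactly the Bobkov--Nazarov comparison \eqref{eq:1} already quoted.
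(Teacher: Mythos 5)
The step where you import the ``standard'' moment bound $\int\abs{\xtheta}^{2q}\de\nu_{\B_1^n}\le (Cq/n)^{2q}\maxnorm{\theta}^{2q}$ is where the argument breaks: that bound is false for non-sparse directions. Take $\theta=(1,\ldots,1)$, so $\maxnorm{\theta}=1$; by unconditionality the mixed terms vanish and the exact formula used in the paper gives $\int\xtheta^{2}\de\nu_{\B_1^n}=n\cdot\frac{2}{(n+1)(n+2)}\asymp 1/n$, which is far larger than the claimed $C^2/n^2$. A linear functional on $\B_1^n$ is $\psi_1$ at scale $\asymp\norm{\theta}_2/n$, not $\maxnorm{\theta}/n$; the two coincide only for coordinate directions. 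This is fatal for your plan in two ways. First, the interpolation you gesture at (a $\psi_1$ bound plus the pointwise bound on the sphere, which is $\abs{\xtheta}\le\maxnorm{\theta}$ on $\mathbb{S}_1^{n-1}$, not $n\maxnorm{\theta}$) yields a $\psi_2$ scale of order $\sqrt{(\norm{\theta}_2/n)\maxnorm{\theta}}$, which for the diagonal direction is $\maxnorm{\theta}\,n^{-1/4}$, not the required $\maxnorm{\theta}\,n^{-1/2}$. Second, even taking your bound at face value, summing the exponential series at $t\asymp\maxnorm{\theta}/\sqrt n$ gives terms of order $(C'q/n)^{q}$, which diverge in $q$ for every fixed $n$, so no ``absolute threshold in $n$'' rescues the computation; a purely $\psi_1$-type moment growth can never be summed into a $\psi_2$ estimate at this scale. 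The sub-Gaussian behaviour in the hard (non-sparse) directions comes from cancellation across coordinates, and this is exactly what the Bobkov--Nazarov argument reproduced in the paper captures: expand $\xtheta^{2q}$ multinomially using unconditionality, use the exact Dirichlet-type formula $\int_{\R^n}\prod_i x_i^{2q_i}\de\nu_{\B_1^n}(x)=\frac{n!}{(n+2q)!}\prod_i(2q_i)!$, bound $\prod_i\theta_i^{2q_i}\le\maxnorm{\theta}^{2q}$, and count multi-indices; this produces the genuinely sub-Gaussian bound $\int\abs{\xtheta}^{2q}\de\mu_{\B_1^n}\le\frac{q!}{2}\bigl(2\maxnorm{\theta}/\sqrt n\bigr)^{2q}$ valid for all $q$, whence a convergent geometric series. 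Your proposed shortcut via the one-dimensional marginal in direction $e_1$ misreads \eqref{eq:1}: that comparison is between bodies, not between directions, and $e_i$ is in fact the easiest direction for this estimate, not the extremal one.

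Two further points. The lemma asserts the explicit constant $\sqrt 6$, which is then used to get the constant $3$ in \Cref{prop:psi2} and the $72$ in the Bernstein step; your argument, by its own admission (``after adjusting the constant $\sqrt6$'', an unspecified $C$, absorbing small dimensions), can only produce an unnamed absolute constant, and for bounded $n$ the trivial bound $\abs{\xtheta}\le\maxnorm{\theta}$ gives $\psinorm{\B_1^n}{\dottheta}\le\maxnorm{\theta}/\sqrt{\log 2}$, which beats $\sqrt6\maxnorm{\theta}/\sqrt n$ only for $n\le 4$, so the ``small dimensions'' cannot simply be absorbed. Finally, in \eqref{eq:coefficient} the measure $\nu_{\B_1^n}$ is the uniform probability measure on the unit cross-polytope itself, not on its volume-one rescaling $c_n\B_1^n$; with your reading, the identity $m_{2q}=\frac{n+2q}{n}\int\abs{\xtheta}^{2q}\de\nu_{\B_1^n}$ that you invoke would be off by a factor $c_n^{2q}$.
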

	\begin{proof}
		The proof follows the one in \cite{BobkovNazarov} (see also page 305 in \cite{IsotropicConvexBodies}). Let \(q\in\N\cup\{0\}\). Using the unconditionality of \(\B_1^n\), expanding the power of the scalar product yields
		\begin{equation*}		
		\int_{\R^n}\abs{\xtheta}^{2q}\de\nu_{\B_1^n}(x)=\sum_{q_i\in\N\cup\{0\},\sum_{i=1}^n q_i=q}\binom{2q}{2q_1,\ldots,2q_n}\prod_{i=1}^n\theta_i^{2q_i}\int_{\R^n}\prod_{i=1}^nx_i^{2q_i}\de\nu_{\B_1^n}(x),
		\end{equation*}
		where we used the standard notation for multinomial coefficients.
		Moreover, whenever we have \(q_1+\ldots+q_n=q\), it holds
		\begin{equation*}
		\int_{\R^n}\prod_{i=1}^nx_i^{2q_i}\de\nu_{\B_1^n}(x)=\frac{n!}{(n+2q)!}\prod_{i=1}^{n}(2q_i)!\,.
		\end{equation*}
		For the sake of completeness, we prove this claim by induction. Note that it is equivalent to
		\begin{equation*}
		\int_{\B_1^n}\prod_{i=1}^nx_i^{2q_i}\de x=\frac{2^n}{(n+2q)!}\prod_{i=1}^{n}(2q_i)!\,.
		\end{equation*}
		The equality holds for \(n=1\), indeed both sides are equal to \(2/(1+2 q_1)\). Suppose that it holds in dimension \(n\) and for exponents \(2q_1,\ldots,2q_n\) whose sum is equal to \(2q\). We want to prove it in dimension \(n+1\) adding a new exponent \(2q_{n+1}\):
		\begin{align*}
		\int_{\B_1^{n+1}}\prod_{i=1}^{n+1} x_i^{2q_i}\de x_1\ldots\de x_{n+1}&=\int_{-1}^{1}\int_{(1-\abs{x_{n+1}})\B_1^n}\prod_{i=1}^nx_i^{2q_i}\de x_1\ldots\de x_n\,x_{n+1}^{2q_{n+1}}\de x_{n+1}\\
		&=\int_{-1}^1\int_{\B_1^n}(1-\abs{x_{n+1}})^{n+2q}\,\prod_{i=1}^{n}y_i^{2q_i}\de y_1\ldots \de y_n\,x_{n+1}^{2q_{n+1}}\de x_{n+1}\\
		&=\frac{2^n}{(n+2q)!}\prod_{i=1}^{n}(2q_i)!\int_1^1 (1-\abs{x_{n+1}})^{n+2q}x_{n+1}^{2q_{n+1}}\de x_{n+1}\\
		&=\frac{2^{n+1}}{(n+2q)!}\prod_{i=1}^{n}(2q_i)!\int_{0}^{1}(1-z)^{n+2q}z^{2q_{n+1}}\de z\\
		&=\frac{2^{n+1}}{(n+2q)!}\prod_{i=1}^{n}(2q_i)!\frac{(n+2q)!(2q_{n+1})!}{(n+2q+2q_{n+1}+1)!}\\
		&=\frac{2^{n+1}}{(n+1+2\sum_{i=1}^{n+1}q_i)!}\prod_{i=1}^{n+1}(2q_i)!,
		\end{align*}
		which proves the claim.
		
		Now, if we set \(\alpha\coloneqq\sqrt{n}\maxnorm{\theta}\), then it holds that \(\prod_{i=1}^n\theta_i^{2q_i}\le \alpha^{2q}n^{-q}\).
		This yields
		\begin{align*}
		\int_{\R^n}\abs{\xtheta}^{2q}\de\nu_{\B_1^n}(x)&\le \sum_{q_i\in\N\cup\{0\},\sum_{i=1}^n q_i=q}\binom{2q}{2q_1,\ldots,2q_n}\frac{\alpha^{2q}}{n^q}\frac{n!}{(n+2q)!}\prod_{i=1}^{n}(2q_i)!\\
		&=\sum_{q_i\in\N\cup\{0\},\sum_{i=1}^n q_i=q} \frac{n!(2q)!\,\alpha^{2q}}{(n+2q)!\,n^q}\\
		&=\binom{n+q-1}{n-1}\frac{n!(2q)!\,\alpha^{2q}}{(n+2q)!\,n^q},
		\end{align*}
		where in the last equality we used that the cardinality of the set of indices in the sum is precisely $\binom{n+q-1}{n-1}$. Using \Cref{eq:coefficient}, we get
		\begin{align*}
		\int_{\R^n}\abs{\xtheta}^{2q}\de\mu_{\B_1^n}(x)&=\frac{n+2q}{n}	\int_{\R^n}\abs{\xtheta}^{2q}\de\nu_{\B_1^n}(x)\\
		&\le\binom{n+q-1}{n-1}\frac{(n-1)!(2q)!\,\alpha^{2q}}{(n+2q-1)!\,n^q}\\
		&=\oneover{(n+q)\cdots(n+2q-1)}\frac{(2q)!\,\alpha^{2q}}{q!\,n^q}\\
		&\le\frac{q!}{2}\Bigl(\frac{2\alpha}{n}\Bigr)^{2q},
		\end{align*}
		where for the last step we used the inequality \(2(2q)!\le (2^q q!)^2\), which can be checked by induction on \(q\in\N\), and the fact that \(n^q\le(n+q)\cdots(n+2q-1)\).
		When \( \abs{t}<1/(2\alpha)\), we have
		\begin{align*}
		\int_{\R^n}\exp\bigl((tn{\xtheta})^{2}\bigr)\de\mu_{\B_1^n}(x)&=1+\sum_{q=1}^\infty \frac{t^{2q}n^{2q}}{q!}	\int_{\R^n}\abs{\xtheta}^{2q}\de\mu_{\B_1^n}(x)\\
		&\le 1+\frac{1}{2}\sum_{q=1}^\infty(2t\alpha)^{2q}\\
		&=1+\oneover{2}\Bigl(\oneover{1-4t^2\alpha^2}-1\Bigr).
		\end{align*}
		If \(t_0\) is such that the last expression equals \(2\) when evaluated in \(t=t_0\), we get that 
		\begin{equation*}
		n\psinorm{\B_1^n}{\dottheta}\le 1/t_0=\sqrt{6}\alpha=\sqrt{6}\sqrt{n}\maxnorm{\theta},
		\end{equation*}
		which completes the proof.
	\end{proof}
	
	\begin{proof}[Proof of \Cref{prop:psi2}]
	From \Cref{lem:comparison} and the definition of \(\psinorm{V}{\dottheta}\), we get 
	\begin{align*}
	\int_{\partial K}\exp\Bigl(\xtheta^2/&\psinorm{V}{\dottheta}^2\Bigr)\de\mu_K(x)\\
	&=1+\sum_{q=1}^\infty\oneover{q!\psinorm{V}{\dottheta}^{2q}}	\int_{\partial K}\abs{\langle x,\theta\rangle}^{2q}\de\mu_K(x)\\
	&\le 1+\sum_{q=1}^\infty\oneover{q!\psinorm{V}{\dottheta}^{2q}}	\int_{\partial V}\abs{\langle x,\theta\rangle}^{2q}\de\mu_V(x)\\
	&= 2.
	\end{align*}
	In particular, we have
	\begin{equation*}
	\psinorm{K}{\dottheta}\le\psinorm{V}{\dottheta}.
	\end{equation*}
	Moreover, from \Cref{lem:psi2cB1} and \Cref{lem:psi2B1}, we obtain
	\begin{equation*}
	\psinorm{V}{\dottheta}=\frac{\sqrt{6}}{2}n\psinorm{\B_1^n}{\dottheta}\le 3\sqrt{n}\maxnorm{\theta}.
	\end{equation*}
	The proof is thus complete.
	\end{proof}
	
	\begin{remark}\label{rem:ExtremalBodies}
	Let us emphasize that we decided to use a different approach than the one used in \Cref{lem:Psi1EstimateGeneral} below in order to gain a comparison between the cone measure of an isotropic unconditional convex body and the cone measure of a suitably rescaled ball with respect to the $1\text{-norm}$. Also, we have made explicit every constant in our computations.
	\end{remark}
	
	\section{Proof of Theorem \ref{thm:IsotropicConstant}}\label{sec:ProofIsotropicConstant}
	
	Recalling the bound for the isotropic constant presented in \Cref{lem:boundLK} (a), our proof is naturally divided into two parts. The first is concerned with a lower bound on the volume radius of our random polytope. 
	
	In the following Lemma, part \((a)\) will be applied to the case of an isotropic unconditional convex body, while part \((b)\) will be used for the general case of an isotropic convex body.
	\begin{lemma}\label{lem:volrad}
		Let $K\subset\R^n$ be a convex body with \(\vol{K}=1\) and $K_N$ the symmetric convex hull of $N$ independent random points on \(\partial K\) with distribution $\mu_K$.
	\begin{itemize}
	\item[(a)] There exist constants \(c_1\in(1,\infty)\) and \(c_2\in(0,\infty)\) such that the event  that
			\begin{equation*}
			\abs{K_N}^{1/n}\ge c_2\min\biggl\{\sqrt{\frac{\log(2N/n)}{n}},1\biggr\}
			\end{equation*}
			has probability greater than \(1-\exp(-n)\) when \(N\ge c_1 n\).
	\item[(b)]There exist constants \(c_1\in(1,\infty)\) and \(c_2\in(0,\infty)\) such that the event  that
			\begin{equation*}
			\abs{K_N}^{1/n}\ge c_2\,L_K\sqrt{\frac{\log(2N/n)}{n}}
			\end{equation*}
			has probability greater than \(1-\exp(-c_1\sqrt{N})\) when \(n\leq N\leq e^{\sqrt{n}}\). 
	\end{itemize}
	\end{lemma}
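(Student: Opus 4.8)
The plan is to estimate $\vol{K_N}^{1/n}$ from below by controlling the Minkowski functional of $K_N$ (equivalently, its polar body $K_N^\circ$) and then comparing volumes. Recall that since $K_N = \mathrm{conv}\{\pm X_1,\dots,\pm X_N\}$, its polar body is $K_N^\circ = \{y\in\R^n : |\langle y, X_i\rangle|\le 1 \text{ for all } i\}$, and by the Bourgain--Milman inequality (or, in the symmetric case, directly) one has $\vol{K_N}^{1/n}\vol{K_N^\circ}^{1/n}\ge c/n$. So it suffices to produce an \emph{upper} bound of the form $\vol{K_N^\circ}^{1/n}\le C/( \sqrt n\cdot r_N)$, where $r_N$ is the target lower bound for $\vol{K_N}^{1/n}$. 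The standard route (as in \cite{KK,A,HPT17,ALT,DGG}) is to show that with high probability $K_N^\circ$ is contained in a Euclidean ball of radius comparable to $1/(\sqrt n\, r_N)$, or more efficiently to bound the mean width / a covering-number quantity of $K_N^\circ$. Concretely, one wants: with probability $\ge 1 - e^{-n}$ (resp.\ $\ge 1-e^{-c\sqrt N}$), every point $y$ in a suitable $1/2$-net of the sphere $\sqrt n\,\mathbb S^{n-1}$ (scaled appropriately) satisfies $\max_i |\langle y, X_i\rangle| > 1$, which forces $K_N^\circ\subset \rho\,\B_2^n$ for the desired $\rho$.

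The key probabilistic input is a \emph{small-ball estimate} for $\mu_K$: for a fixed direction $\theta\in\mathbb S^{n-1}$ one needs a lower bound on $\mu_K(\{x\in\partial K : \langle x,\theta\rangle > t\})$ for an appropriate threshold $t$. In case (a), for an isotropic body the natural scale is $t\asymp 1$ (a universal constant), and one uses that a single random point $X$ lands in the slab $\{\langle\cdot,\theta\rangle > c\}$ with probability bounded below by an absolute constant $p_0>0$ --- this follows from the polar-integration identity \eqref{eq:coefficient} together with a corresponding small-ball bound for the uniform measure $\nu_K$ on an isotropic body (Paouris-type or the elementary fact that $\int_K \langle x,\theta\rangle^2\,dx = L_K^2 \ge c_0$ combined with a reverse Hölder / Borell inequality). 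Then $\mathbf P(\max_i\langle X_i,\theta\rangle \le c) = (1-p_0)^N$, and a union bound over an exponential-size net of $\mathbb S^{n-1}$ (of cardinality $\le 5^n$) gives failure probability $\le 5^n(1-p_0)^N$, which is $\le e^{-n}$ once $N\ge c_1 n$. Translating "$K_N$ contains all net directions up to scale $c$" into "$K_N \supset c'\,\B_2^n\cap(\text{something})$" is not quite enough to beat $1/\sqrt n$; to get the $\sqrt{\log(2N/n)/n}$ improvement one instead runs the net argument at the larger threshold $t\asymp\sqrt{\log(2N/n)}$ using the $\psi_2$ small-ball behavior from the other side, i.e.\ $\mu_K(\langle\cdot,\theta\rangle > t) \ge e^{-Ct^2}$ down to that scale, so that $5^n e^{-N e^{-Ct^2}}\le e^{-n}$ when $t\asymp\sqrt{\log(2N/n)}$; this yields $K_N^\circ\subset (C/(t\sqrt n))\B_2^n$ and hence the claimed volume radius bound via Bourgain--Milman.

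For case (b) the body need not be unconditional, so one cannot invoke the clean $\psi_2$ bound of Proposition \ref{prop:psi2}; instead one uses that for an arbitrary isotropic convex body $\langle\cdot,\theta\rangle$ is sub-exponential with $\psi_1$-norm $\le C L_K$ (Borell's lemma), giving the one-sided small-ball estimate $\mu_K(\langle\cdot,\theta\rangle > tL_K)\ge e^{-Ct}$ for $t$ up to $\asymp\log(2N/n)$; rerunning the net argument at threshold $t L_K$ with $t\asymp\log(2N/n)$ gives $K_N^\circ\subset (C/(tL_K\sqrt n))\B_2^n$... but this only produces $\sqrt{\log}/\sqrt n$ after optimizing, since the sub-exponential tail is weaker --- here the correct bookkeeping (following \cite{ALT}, Theorem 11.3.7 in \cite{IsotropicConvexBodies}) uses threshold $t\asymp\sqrt{\log(2N/n)}$ again but now the probability of a single point clearing the slab is $\mu_K(\langle\cdot,\theta\rangle > c\sqrt{\log(2N/n)}\,L_K)\ge (n/2N)^{c}$, so the union bound reads $5^n(1-(n/2N)^c)^N\le 5^n\exp(-N(n/2N)^c)$; the constraint $N\le e^{\sqrt n}$ is exactly what makes $N(n/2N)^c\ge c\sqrt N$ dominate $n$, yielding the failure probability $e^{-c_1\sqrt N}$. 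The volume comparison then gives $\vol{K_N}^{1/n}\ge c/(n\,\vol{K_N^\circ}^{1/n})\ge c_2 L_K\sqrt{\log(2N/n)/n}$.

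The main obstacle I anticipate is not the net/union-bound skeleton --- that is routine --- but getting the \emph{right} one-sided small-ball lower bounds for the cone measure $\mu_K$ at the scale $\sqrt{\log(2N/n)}$, with constants uniform over all isotropic bodies, and in case (b) tracking the interplay between the threshold $t$, the tail exponent, and the constraint $N\le e^{\sqrt n}$ so that the two competing error terms $5^n$ and $\exp(-N(n/2N)^c)$ balance to give $e^{-c_1\sqrt N}$ rather than something weaker. The polar-integration identity \eqref{eq:coefficient} is the device that transfers the needed estimates from $\nu_K$ (log-concave, well understood) to $\mu_K$, so the real content is choosing thresholds and invoking the correct concentration/anti-concentration facts for log-concave measures.
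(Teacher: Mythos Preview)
Your approach is genuinely different from the paper's, and considerably more complicated. The paper's proof is essentially a one-line coupling argument: take $Y_1,\ldots,Y_N$ independent and uniform on $K$, set $X_i \coloneqq Y_i/\norm{Y_i}_K$ (with an arbitrary boundary value if $Y_i=0$), and observe that (i) the $X_i$ are i.i.d.\ with law $\mu_K$ (the pushforward of $\nu_K$ under radial projection is exactly the cone measure), and (ii) since $\norm{Y_i}_K\le 1$, each $Y_i$ lies on the segment $[0,X_i]$, so pointwise on the probability space $K_N \supseteq \widetilde K_N \coloneqq \mathrm{conv}\{\pm Y_1,\ldots,\pm Y_N\}$. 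Hence $\vol{K_N}^{1/n}\ge \vol{\widetilde K_N}^{1/n}$, and the two parts follow immediately from the already-known results for the \emph{uniform} measure on $K$: \cite[Proposition~2.2]{DGG} for (a) and \cite[Theorem~4.1]{DGT1} (in the form of \cite[Theorem~11.3.7]{IsotropicConvexBodies}) for (b). No small-ball estimate for $\mu_K$, no net argument, and no Bourgain--Milman are needed.

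Your route --- small-ball for $\mu_K$, a net on $\mathbb S^{n-1}$, containment $K_N^\circ\subset\rho\,\B_2^n$, then Bourgain--Milman --- is an attempt to reprove those cited results from scratch, transported to the cone measure. Besides being unnecessary once one sees the coupling, it has a genuine gap: the lemma assumes only $\vol{K}=1$, not isotropy. For a highly eccentric $K$ (think of a thin slab of volume~$1$), $K_N$ will be equally eccentric and $K_N^\circ$ will \emph{not} sit inside any Euclidean ball of radius $C/\sqrt{\log(2N/n)}$; your containment step simply fails in that generality, even though the volume bound still holds. Even if one restricts to isotropic $K$, the anti-concentration inputs you invoke --- lower bounds of the form $\mu_K(\{\langle\cdot,\theta\rangle > t\})\ge e^{-Ct^2}$ or $\ge e^{-Ct}$ --- are asserted but not proved; Proposition~\ref{prop:psi2} and Lemma~\ref{lem:Psi1EstimateGeneral} give only \emph{upper} tail bounds, and transferring a lower tail bound from $\nu_K$ to $\mu_K$ is not as automatic as the moment identity \eqref{eq:coefficient}. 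The coupling $K_N\supseteq\widetilde K_N$ sidesteps all of this in one stroke.
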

	\begin{proof}		
		Let us start with (a). We use a coupling argument that was introduced in \cite{HPT17}. Let \(Y_1,\ldots,Y_N\) be independent random points distributed according to the uniform distribution on \(K\), and define the symmetric random polytope
			\begin{equation*}
			\widetilde{K}_N\coloneqq\mathrm{conv}\{\pm Y_1,\ldots,\pm Y_N\}.
			\end{equation*}
		It is proven in \cite[Proposition 2.2]{DGG} that if \(N\ge c_1 n\), then
			\begin{equation*}
			\vol{\widetilde{K}_N}^{1/n}\ge c_2\min\biggl\{\sqrt{\frac{\log(2N/n)}{n}},1\biggr\}
			\end{equation*}
		with probability greater than \(1-\exp(-n)\). For \(i\in\{1,\ldots,N\}\), consider the random variables
	
			\begin{equation*}
				X_i\coloneqq\begin{dcases}
					\frac{Y_i}{\norm{Y_i}_K}&:\, \norm{Y_i}_K\neq 0\\
					y\in\partial K&:\, \norm{Y_i}_K= 0\,,
				\end{dcases}
			\end{equation*}
		where \(y\) is a fixed but arbitrary point on \(\partial K \). By definition, the points \(X_1,\ldots,X_N\) are independent and belong to \(\partial K \).  Moreover, the push-forward probability measure of the uniform distribution $\nu_K$ under the map $K\ni y\mapsto y/\norm{y}_K\in\partial K$ is exactly the cone probability measure \(\mu_K \) on $K$. Indeed, for any Borel set \(B\subset\partial K\), 
		\begin{equation*}
		\P(X_i\in B)=\P(Y_i\in(0,1]B)=\frac{\abs{(0,1]B}}{\vol{K}}=\mu_K(B).
		\end{equation*}
	Note also that it follows from the symmetry of \(K\) that if \(X\in\partial K\), then also \(-X\in\partial K\). In particular, the symmetric random polytope
		\begin{equation*}
			K_N\coloneqq\mathrm{conv}\{\pm X_1,\ldots, \pm X_N\} 
		\end{equation*}	
	has the desired distribution. Moreover, by construction \(K_N(\omega)\supseteq\widetilde{K}_N(\omega)\) for every realization \(\omega\in\Omega\), so that 
		\begin{equation*}
			\vol{K_N}^{1/n}\ge\vol{\widetilde{K}_N}^{1/n}\ge c_2\min\biggl\{\sqrt{\frac{\log(2N/n)}{n}},1\biggr\}
		\end{equation*}
		with probability greater than \(1-\exp(-n) \).
		
		The proof of part (b) is similar. The only change is that for the lower bound for $\vol{\widetilde{K}_N}^{1/n}$ instead of \cite[Proposition 2.2]{DGG} we now use \cite[Theorem 4,1]{DGT1} in the form of \cite[Theorem 11.3.7]{IsotropicConvexBodies}.
		\end{proof}
	
	Now that we have established the \(\psi_2\text{-estimate}\) in the previous section, we can proceed to bound the second quantity that we need in view of \Cref{lem:bernstein} (a).
	
	\begin{lemma}
		\label{prop:1normbound}
		Let \(K\subset\R^n\) be an isotropic unconditional convex body. For \(N>n\) let \(X_1,\ldots,X_N\) be independent random points distributed according to the cone measure on \(\partial K \). Then there exist constants \(c,C\in(0,\infty) \) such that with probability greater than \(1-\exp(-cn\log(2N/n))\) it holds 
		\begin{equation*}
		\max_{\epsilon_1,\ldots,\epsilon_n=\pm 1}\norm{\epsilon_1X_{i_1}+\ldots+\epsilon_nX_{i_n}}_1\le C\,n^{3/2}\sqrt{\log(2N/n)}
		\end{equation*}
		for all subsets of vertices \(\{X_{i_1},\ldots,X_{i_n}\}\subset\{\pm X_1,\ldots, \pm X_N\}\).
	\end{lemma}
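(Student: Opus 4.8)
The plan is to fix an $n$-element subset of the $2N$ points $\pm X_1,\ldots,\pm X_N$ together with a sign vector, to rewrite the $\ell_1$-norm as a maximum of linear functionals, to control each such functional by combining the $\psi_2$-estimate of \Cref{prop:psi2} with Bernstein's inequality \Cref{lem:bernstein}~(a), and to conclude with a union bound over all choices. The feature that makes this go through is that \Cref{prop:psi2} yields a $\psi_2$-constant of order $\sqrt n$, whose square cancels exactly the factor $n$ supplied by Bernstein's inequality, so that the resulting tail is free of the dimension.

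First I would use the identity $\norm{v}_1=\max_{\eta\in\{\pm1\}^n}\langle v,\eta\rangle$, valid for every $v\in\R^n$, to write
\begin{equation*}
\max_{\epsilon_1,\ldots,\epsilon_n=\pm1}\norm{\epsilon_1X_{i_1}+\ldots+\epsilon_nX_{i_n}}_1=\max_{\epsilon,\eta\in\{\pm1\}^n}\,\sum_{j=1}^n\epsilon_j\langle X_{i_j},\eta\rangle.
\end{equation*}
Then I would fix the subset, i.e.\ indices $i_1,\ldots,i_n\in\{1,\ldots,N\}$, and vectors $\epsilon,\eta\in\{\pm1\}^n$. After absorbing the signs of the chosen points into $\epsilon$, one may regroup $\sum_{j=1}^n\epsilon_jX_{i_j}=\sum_{i\in I}b_iX_i$, where $I\subset\{1,\ldots,N\}$ has $\abs I\ge n/2$ and each $b_i$ is an integer with $\abs{b_i}\le2$ (since each of $X_i$ and $-X_i$ occurs at most once in the subset). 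Thus $\sum_{i\in I}b_i\langle X_i,\eta\rangle$ is a sum of at most $n$ independent random variables; these are centred, because the unconditionality of $K$ makes $\mu_K$ invariant under coordinate reflections and hence its barycentre is the origin; and, since $\maxnorm{\eta}=1$, \Cref{prop:psi2} gives $\norm{b_i\langle X_i,\eta\rangle}_{\psi_2}\le2\cdot3\sqrt n=6\sqrt n$. Applying \Cref{lem:bernstein}~(a) therefore yields, for every $s>0$,
\begin{equation*}
\P\Bigl(\abs[\Big]{\sum_{i\in I}b_i\langle X_i,\eta\rangle}>sn\Bigr)\le2\exp\Bigl(-\frac{s^2}{c_0}\Bigr)
\end{equation*}
for an absolute constant $c_0\in(0,\infty)$, the $n$ delivered by Bernstein's inequality being cancelled against $\norm{b_i\langle X_i,\eta\rangle}_{\psi_2}^2\le36n$.

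Next I would take $s\coloneqq A\sqrt{n\log(2N/n)}$ for a large absolute constant $A$ and union bound over the at most $\binom{2N}{n}\le(2eN/n)^n$ subsets, the $2^n$ vectors $\epsilon$, and the $2^n$ vectors $\eta$. Since $N>n$ forces $\log(2N/n)\ge\log2$, one has $n\log(8eN/n)\le\kappa\,n\log(2N/n)$ for an absolute constant $\kappa$, so choosing $A$ with $A^2\ge c_0(\kappa+2)$ makes the probability that $\abs{\sum_{i\in I}b_i\langle X_i,\eta\rangle}>sn$ for some admissible choice at most $2\exp(-2n\log(2N/n))\le\exp(-n\log(2N/n))$, using $n\log(2N/n)\ge\log2$ once more. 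On the complementary event, for every subset and every $\epsilon$ the first display gives
\begin{equation*}
\norm{\epsilon_1X_{i_1}+\ldots+\epsilon_nX_{i_n}}_1\le sn=A\,n^{3/2}\sqrt{\log(2N/n)},
\end{equation*}
which is the assertion with $C=A$ and $c=1$.

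I do not expect a genuine obstacle here: the argument is a routine combination of \Cref{prop:psi2}, Bernstein's inequality, and a union bound. The one point that has to be handled carefully is the calibration of the threshold: the number of relevant events is exponential in $n$, with logarithm of order $n\log(2N/n)$, whereas the Bernstein tail $2\exp(-s^2/c_0)$ carries no factor $n$ in its exponent; this forces $s$ to be of order $\sqrt{n\log(2N/n)}$ rather than $\sqrt{\log(2N/n)}$, and is precisely what produces the exponent $3/2$ of $n$ in the final estimate.
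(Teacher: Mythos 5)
Your proposal is correct and follows essentially the same route as the paper: reduce the $1$-norm to a maximum of linear functionals over $\{\pm1\}^n$, apply the $\psi_2$-bound of \Cref{prop:psi2} together with Bernstein's inequality (\Cref{lem:bernstein}~(a)), and union bound over subsets, signs and directions with $t\asymp\sqrt{n\log(2N/n)}$. The only (welcome) difference is that you treat subsets containing antipodal pairs explicitly via the coefficients $\abs{b_i}\le 2$, a point the paper's proof passes over by an equidistribution remark.
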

	\begin{proof}
		We start considering the points \(X_1,\ldots,X_n\). Fix a direction \(\theta\in\S_\infty \) and an \(n\text{-tuple}\) of signs \(\epsilon=(\epsilon_1,\ldots,\epsilon_n)\in\{-1,+1\}^n\). For every \(i\in\{1,
		\ldots, n\}\), we define the random variables \(Y_i\coloneqq \langle \epsilon_{i}X_i,\theta\rangle\). Note that by \Cref{prop:psi2}, \(\psinorm{K}{Y_i}\le 3\sqrt{n} \) so that we can apply linearity and the $\psi_2$-version of Bernstein's inequality (see \Cref{lem:bernstein} (a)) in order to get
		\begin{equation}\label{eq:ProofMax1}
		\P\bigl(\abs{\langle \epsilon_1X_{1}+\ldots+\epsilon_nX_{n},\theta \rangle}>t n\bigr)\le 2\exp(-t^2/72),
		\end{equation}
		for every \(t>0\). Now we notice that
		\begin{equation*}
		\norm{\epsilon_1X_{1}+\ldots+\epsilon_nX_{n}}_1=\sup_{\theta\in\S_\infty}\abs{\langle \epsilon_1X_{1}+\ldots+\epsilon_nX_{n},\theta \rangle}=\max_{\theta\in\{-1,1\}^n}\abs{\langle \epsilon_1X_{1}+\ldots+\epsilon_nX_{n},\theta \rangle}.
		\end{equation*}
		Hence, we obtain
		\begin{equation}\label{eq:ProofMax2}
		\begin{split}
		\P\biggl(\,\max_{\epsilon\in\{-1,+1\}^n} 	&\norm{\epsilon_1X_{1}+\ldots+\epsilon_nX_{n}}_1>t n\biggr)\\
		&=\P\biggl(\,\max_{\epsilon,\theta\in\{-1,1\}^n} \abs{\langle \epsilon_1X_{1}+\ldots+\epsilon_nX_{n},\theta \rangle}>t n\biggr)\\
		&\le 4^n \P\bigl(\abs{\langle \epsilon_1X_{1}+\ldots+\epsilon_nX_{n},\theta \rangle}>t n\bigr)\\
		&\le\exp\big((2n+1)\log 2-t^2/72\big),
		\end{split}
		\end{equation}
		where we used the union bound together with the fact that \(\abs{\langle \epsilon_1X_{1}+\ldots+\epsilon_nX_{n},\theta \rangle}\) has the same distribution for every choice of signs \(\epsilon_i\)'s and directions \(\theta\). We now consider all the subsets \(\{X_{i_1},\ldots,X_{i_n}\}\subset \{\pm X_1,\ldots,\pm X_n\}\) of cardinality \(n\). Since there are \(\binom{2N}{n}\le (2eN/n)^n=\exp(n\log(2N/n))\) of such subsets, we can set \(t\coloneqq C\sqrt{n\log(2N/n)}\), with \(C\in(0,\infty)\) sufficiently large, and use again the union bound to get 
		\begin{equation}\label{eq:ProofMax3}
		\begin{split}
		\P\biggl(\max_{\{X_{i_1},\ldots,X_{i_n}\}\subset\{\pm X_1,\ldots,\pm X_N\} }\max_{\epsilon\in\{-1,+1\}^n} 	&\norm{\epsilon_1X_{i_1}+\ldots+\epsilon_nX_{i_n}}_1>C n^{3/2}\sqrt{\log(2N/n)}\biggr)\\
		&\le\exp\big((2n+1)\log 2-(C^2/72-1)n\log(2N/n)\big)\\
		&\le\exp\big(-cn\log(2N/n)\big),
		\end{split}
		\end{equation}
		which implies the statement by taking the complementary event.
	\end{proof}
	
	We are now prepared to complete the proof of \Cref{thm:IsotropicConstant}.
	
	\begin{proof}[Proof of \Cref{thm:IsotropicConstant}]
	By \Cref{rem:N<cn} the conclusion is clear if $N\leq cn$ for some constant $c\in(0,\infty)$.
	
	Let us next assume that there are constants $c_0,c_1\in(0,\infty)$ such that $c_0n\leq N\leq e^{c_1n}$. Since every facet of \(K_N\) is obtained as the convex hull of a subset (of cardinality \(n\) with probability one) of all the vertices, \Cref{prop:1normbound} together with \Cref{lem:facets} (a) immediately gives that
	\begin{equation}\label{eq:ProofThm11}
		\oneover{\abs{K_N}}\int_{K_N}\norm{x}_1\de x\le (1+\sqrt{2})C\sqrt{n \log(2N/n)}
	\end{equation}
with probability greater than \(1-\exp(-c n)\), where $c,C\in(0,\infty)$ are the same constants as in \Cref{prop:1normbound}. Combining this with \Cref{lem:boundLK} (a) and \Cref{lem:volrad} (a), we get that 
		\begin{equation}\label{eq:ProofThm12}
		\begin{split}
			L_{K_N}&\le\frac{c_4}{n}\oneover{\abs{K_N}^{1/n}}	\oneover{\abs{K_N}}\int_{K_N}\norm{x}_1\de x\\
			&	\le c \cdot c_2\cdot \oneover{n}\sqrt{\frac{n}{\log(2N/n)}}\cdot(1+\sqrt{2})\, C\sqrt{n\log(2N/n)}\\
			&=(1+\sqrt{2})\,c\cdot c_2\cdot C
		\end{split}
		\end{equation}
with probability greater than  \(1-c_3\exp(-c_4n)\).

Finally, we treat the case where $N\geq e^{an}$ for some constant $a\in(0,\infty)$. In this case \Cref{lem:volrad} (a) yields that $\vol{K_N}^{1/n}\geq c_2$ for some constant $c_2\in(0,\infty)$ with probability at least $1-e^{-n}$. In addition, by unconditionality of \(K\) it holds that \(K\subset (\sqrt{6}/2)n\mathbb{B}_1^n\), hence
\begin{equation*}
\oneover{\vol{K_N}}\int_{K_N}\norm{x}_1\,\de x \leq \frac{\sqrt{6}/2}{\vol{K_N}}\int_{K_N}n\norm{x}_{K_N}\de x \leq \frac{\sqrt{6}}{2}n.
\end{equation*}
Thus, \Cref{lem:boundLK} (a) yields the bound
\begin{equation*}
L_{K_N} \leq \frac{c}{n}\oneover{\vol{K_N}^{1/n}}\oneover{\vol{K_N}}\int_{K_N}\norm{x}_1\de x \leq \frac{c}{n}\oneover{c_2}\frac{\sqrt{6}}{2}n=\frac{\sqrt{6}}{2}\frac{c}{c_2}
\end{equation*}
with probability at least $1-e^{-n}$. The proof is thus complete.
	\end{proof}

\section{Proof of \Cref{thm:IsotropicConstantGeneralK}}\label{sec:ProofGeneral}

In this section we give a proof of \Cref{thm:IsotropicConstantGeneralK}. We start with the following $\psi_1$-estimate.

\begin{lemma}\label{lem:Psi1EstimateGeneral}
Fix an isotropic convex body $K\subset\R^n$ and $\theta\in\mathbb{S}^{n-1}$. Then there exists an absolute constant $c\in(0,\infty)$ such that $\norm{\langle\cdot,\theta\rangle}_{L^{\psi_1}(\mu_K)}\leq c L_K$.
\end{lemma}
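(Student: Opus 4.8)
The plan is to reduce the statement to the analogous—and classical—bound for the uniform probability measure $\nu_K$ on $K$, and then transfer it to the cone measure by the very same polar integration identity already used in the proof of \Cref{lem:comparison}. First I would repeat the computation \eqref{eq:coefficient} with an arbitrary exponent $q$ in place of $2q$ (and with $K_0=K$, so that $\abs{K}=1$): applying the polar formula of \cite[Proposition 1]{NR2002} to $f(x)=\indic_K(x)\abs{\xtheta}^{q}$ gives
\[
\int_{\partial K}\abs{\xtheta}^{q}\de\mu_K(x)=\frac{n+q}{n}\int_{\R^n}\abs{\xtheta}^{q}\de\nu_K(x)\qquad\text{for every }q\in\N .
\]
Thus everything reduces to controlling the moments of $\dottheta$ under the log-concave measure $\nu_K$.

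At this point I would invoke the standard fact that linear functionals of an isotropic convex body are uniformly sub-exponential: since $\nu_K$ is isotropic one has $\norm{\dottheta}_{L^2(\nu_K)}=L_K$, and the reverse Hölder inequalities for log-concave measures (Borell's lemma; see \cite{IsotropicConvexBodies,AsymGeomAnalyBook}) give $\norm{\dottheta}_{L^q(\nu_K)}\le c_0\,q\,L_K$ for all $q\ge1$, with $c_0$ an absolute constant. Plugging this into the identity above and using $\tfrac{n+q}{n}\le 1+q$ yields $\int_{\partial K}\abs{\xtheta}^{q}\de\mu_K(x)\le(1+q)(c_0 q L_K)^{q}$. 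Then I would expand the exponential, bound $q!\ge(q/e)^{q}$, and sum a geometric-type series: for $\lambda>e\,c_0\,L_K$,
\[
\int_{\partial K}\exp\!\Bigl(\frac{\abs{\xtheta}}{\lambda}\Bigr)\de\mu_K(x)\le\sum_{q=0}^{\infty}(1+q)\Bigl(\frac{e\,c_0\,L_K}{\lambda}\Bigr)^{q}=\Bigl(1-\frac{e\,c_0\,L_K}{\lambda}\Bigr)^{-2},
\]
which is at most $2$ as soon as $\lambda=c\,L_K$ for a sufficiently large absolute constant $c$. By the definition of the Luxemburg norm associated with $\psi_1(x)=e^{x}-1$, this is exactly $\norm{\dottheta}_{L^{\psi_1}(\mu_K)}\le c\,L_K$.

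I expect no serious obstacle here: the only point deserving attention is the factor $\tfrac{n+q}{n}$ created by the change of measure from $\nu_K$ to $\mu_K$, but it is merely polynomial in $q$ and is therefore harmlessly dominated by the factorial $q!$ in the exponential series, so the sub-exponential rate is preserved. It is also worth stressing why this produces only a $\psi_1$- and not a $\psi_2$-estimate: for a general (as opposed to unconditional) isotropic convex body the moments $\norm{\dottheta}_{L^q(\nu_K)}$ grow linearly in $q$ rather than like $\sqrt q$, and it is precisely this linear growth that is responsible for the extra $\sqrt{\log(2N/n)}$ factor in \Cref{thm:IsotropicConstantGeneralK} as compared with \Cref{thm:IsotropicConstant}.
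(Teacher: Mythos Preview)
Your proof is correct and follows the same route as the paper: relate moments under $\mu_K$ to those under $\nu_K$ via the polar integration identity \eqref{eq:coefficient}, then invoke Borell's lemma for the sub-exponential estimate under $\nu_K$. The only cosmetic difference is in the final packaging---the paper uses the characterization $\norm{X}_{\psi_1}\simeq\sup_{p\ge1}p^{-1}\norm{X}_{L^p}$ together with the sharper observation $\bigl((n+p)/n\bigr)^{1/p}\le2$ to obtain directly $\norm{\dottheta}_{L^{\psi_1}(\mu_K)}\le C\norm{\dottheta}_{L^{\psi_1}(\nu_K)}$, whereas you sum the exponential series by hand using the cruder bound $(n+q)/n\le1+q$; both reach the same conclusion.
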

\begin{proof}
We recall that \cite[Lemma 2.4.2]{IsotropicConvexBodies} implies that
\begin{align*}
\norm{\langle\cdot,\theta\rangle}_{L^{\psi_1}(\mu_K)} & \leq c\sup_{p\geq 1}\frac{\norm{\langle\cdot,\theta\rangle}_{L^p(\mu_K)}}{p}
\end{align*}
for some absolute constant $c\in(0,\infty)$. Moreover, from \eqref{eq:coefficient} we deduce that
\begin{align*}
\frac{\norm{\langle\cdot,\theta\rangle}_{L^p(\mu_K)}}{p} =\Bigl(\frac{n+p}{n}\Bigr)^{1/p}\frac{\norm{\langle\cdot,\theta\rangle}_{L^p(\nu_K)}}{p},
\end{align*}
where $\nu_K$ is the uniform distribution on $K$. This implies
\begin{align*}
\norm{\langle\cdot,\theta\rangle}_{L^{\psi_1}(\mu_K)} \leq c\,\sup_{p\geq 1}\Bigl(\frac{n+p}{n}\Bigr)^{1/p}\sup_{p\geq 1}\frac{\norm{\langle\cdot,\theta\rangle}_{L^p(\nu_K)}}{p}\leq C\norm{\langle\cdot,\theta\rangle}_{L^{\psi_1}(\nu_K)}
\end{align*}
for another constant $C\in(0,\infty)$, since the first supremum is bounded by $2$. However, $\norm{\langle\cdot,\theta\rangle}_{L^{\psi_1}(\nu_K)}$ is bounded by a constant multiple of \(L_K\), since every isotropic log-concave measure is known to be a so-called $\psi_1\text{-measure}$ (this is essentially a consequence of Borell's lemma, see \cite[page 81]{IsotropicConvexBodies}).
\end{proof}

In a next step we observe that \Cref{lem:volrad} (b) yields a lower bound for $\vol{K_N}^{1/n}$, which depends on the isotropic constant $L_K$ of $K$ whenever $N\leq e^{\sqrt{n}}$. In addition, \Cref{prop:1normbound} needs an adaptation. Especially, while in the unconditional case we could work with the $1$-norm, here we have to deal with the $2$-norm instead. Eventually, this leads to the appearance of the additional logarithmic factor in our final result. Moreover, we have to make explicit now the dependence on $L_K$, since for a general isotropic convex body we do not know whether or not this quantity is bounded by an absolute constant, as explained in the introduction. In the end this will allow us to bound $L_{K_N}$ independently of $L_K$ if $N\leq e^{\sqrt{n}}$. 

\begin{lemma}\label{prop:1NormBoundGeneral}
		Let \(K\subset\R^n\) be an isotropic convex body. For \(N>n\) let \(X_1,\ldots,X_N\) be independent random points distributed according to the cone measure on \(\partial K \). Then there exist constants \(c,C\in(0,\infty) \) such that with probability greater than \(1-\exp(-cn\log(2N/n))\) it holds 
		\begin{equation*}
		\max_{\epsilon_1,\ldots,\epsilon_n=\pm 1}\norm{\epsilon_1X_{i_1}+\ldots+\epsilon_nX_{i_n}}_2\le C L_K n \log(2N/n)
		\end{equation*}
		for all subsets of vertices \(\{X_{i_1},\ldots,X_{i_n}\}\subset\{\pm X_1,\ldots, \pm X_N\}\).
\end{lemma}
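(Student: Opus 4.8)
The plan is to reproduce the argument of \Cref{prop:1normbound} with three modifications. In place of the $\psi_2$-estimate of \Cref{prop:psi2} I would use the $\psi_1$-estimate of \Cref{lem:Psi1EstimateGeneral}, which gives $\norm{\langle\cdot,\theta\rangle}_{L^{\psi_1}(\mu_K)}\le cL_K$ \emph{uniformly} over $\theta\in\S$; in place of part~(a) of Bernstein's inequality I would invoke part~(b) of \Cref{lem:bernstein}; and, since the Euclidean norm is not the support function of a polytope with only $2^n$ vertices, the reduction $\norm{v}_1=\max_{\theta\in\{-1,1\}^n}\langle v,\theta\rangle$ of \Cref{prop:1normbound} has to be replaced by a covering argument: fixing a $\tfrac12$-net $\mathcal N$ of $\S$ with $\abs{\mathcal N}\le 5^n$, one has $\norm{v}_2\le 2\max_{\theta\in\mathcal N}\langle v,\theta\rangle$ for every $v\in\R^n$. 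I would first note that $\langle\cdot,\theta\rangle$ is centred under $\mu_K$: by the polar integration formula (used already in \eqref{eq:coefficient}), $\int_{\partial K}\langle x,\theta\rangle\de\mu_K(x)=\tfrac{n+1}{n}\int_K\langle x,\theta\rangle\de x=0$ since $K$ is isotropic and hence centred, which legitimizes the use of Bernstein's inequality.

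Next I would fix a subset $\{X_{i_1},\dots,X_{i_n}\}\subset\{\pm X_1,\dots,\pm X_N\}$, a sign vector $\epsilon\in\{-1,1\}^n$ and a direction $\theta\in\mathcal N$, and set $Y_j\coloneqq\langle\epsilon_jX_{i_j},\theta\rangle$. By \Cref{lem:Psi1EstimateGeneral} each $Y_j$ has $\psi_1$-norm at most $cL_K$ (if the subset contains both $X_k$ and $-X_k$ for some $k$, I would first group the two corresponding terms, which only affects the absolute constant), and part~(b) of \Cref{lem:bernstein} then yields, for every $t\ge cL_K$,
\[
\P\Bigl(\abs[\Big]{\sum_{j=1}^n Y_j}>tn\Bigr)\le 2\exp\Bigl(-\frac{tn}{6cL_K}\Bigr).
\]
Combining $\norm{v}_2\le 2\max_{\theta\in\mathcal N}\langle v,\theta\rangle$ with a union bound over the $\abs{\mathcal N}\le 5^n$ net directions, the $2^n$ sign vectors, and the $\binom{2N}{n}\le(2eN/n)^n=\exp\bigl(n\log(2eN/n)\bigr)$ subsets of vertices, the probability that the maximum in the statement exceeds $2tn$ is at most $2\cdot 10^n\exp\bigl(n\log(2eN/n)\bigr)\exp\bigl(-tn/(6cL_K)\bigr)$. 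Finally I would choose $t\coloneqq C'L_K\log(2N/n)$ with $C'$ a large enough absolute constant; since $\log(2N/n)\ge\log 2$, both $\log 10$ and $\log(2eN/n)$ are bounded by absolute multiples of $\log(2N/n)$, so this probability is at most $\exp\bigl(-cn\log(2N/n)\bigr)$ for $C'$ large, and on the complementary event the maximum is at most $2tn=2C'L_Kn\log(2N/n)=CL_Kn\log(2N/n)$, which is the assertion.

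The only genuinely new point compared with \Cref{prop:1normbound} is the replacement of the exact $\ell_\infty$-duality by the covering argument, and this is harmless: a $\tfrac12$-net of $\S$ has only $5^n$ elements, a factor absorbed by the $\binom{2N}{n}$ term without changing the shape of the bound. A secondary, minor point is that \Cref{lem:Psi1EstimateGeneral}, unlike the unconditional $\psi_2$-estimate of \Cref{prop:psi2}, does not come with any invariance of the law of $\langle\cdot,\theta\rangle$ in $\theta$; this costs nothing, because the $\psi_1$-bound already holds uniformly over $\theta\in\S$, so the union bound over $\mathcal N$ proceeds term by term. Beyond this I do not anticipate a real obstacle: the lemma is essentially a transcription of \Cref{prop:1normbound} with Bernstein~(b) in place of Bernstein~(a), and the heavier sub-exponential tail is exactly why the conclusion carries the factor $\log(2N/n)$ here rather than $\sqrt{\log(2N/n)}$ --- a single logarithm which, once passed through \Cref{lem:facets}(b), \Cref{lem:boundLK}(b) and the volume lower bound of \Cref{lem:volrad}(b), produces the $\sqrt{\log(2N/n)}$ of \Cref{thm:IsotropicConstantGeneralK}.
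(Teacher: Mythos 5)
Your proposal is correct and follows essentially the same route as the paper: the $\psi_1$-estimate of \Cref{lem:Psi1EstimateGeneral} combined with Bernstein's inequality (\Cref{lem:bernstein}(b)), a $\tfrac12$-net of $\mathbb{S}^{n-1}$ of cardinality at most $5^n$ to handle the Euclidean norm, and union bounds over signs, net directions and the $\binom{2N}{n}$ subsets, with the parameter chosen proportional to $L_K\log(2N/n)$. The only cosmetic difference is that the paper passes from the net to the whole sphere via the expansion $\theta=\sum_j\delta_j\theta_j$ with $\delta_j\in[0,2^{1-j}]$, which is just a proof of the inequality $\norm{v}_2\le 2\max_{\theta\in\mathcal{N}}\langle v,\theta\rangle$ that you invoke directly; your explicit treatment of the centring of $\langle\cdot,\theta\rangle$ under $\mu_K$ and of subsets containing antipodal pairs are points the paper leaves implicit, and both are handled correctly.
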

\begin{proof}
The proof follows the one of \Cref{prop:1normbound} and we shall indicate the necessary modifications.

Let $X_1,\ldots,X_n$ be independent random points with distribution $\mu_K$ and, for $\theta\in\mathbb{S}^{n-1}$ and $\varepsilon_1,\ldots,\varepsilon_n\in\{-1,+1\}$, put $Y_i\coloneqq\langle\varepsilon_iX_i,\theta\rangle$ for any $i\in\{1,\ldots,n\}$. We start by noticing that Lemma \ref{lem:Psi1EstimateGeneral} implies that if $K\subset\R^n$ is an arbitrary isotropic convex body, we have that $\norm{Y_i}_{L^{\psi_1}(\mu_K)}\leq c L_K$ for some absolute constant $c\in(0,\infty)$ and any $i\in\{1,\ldots,n\}$. 

Thus, we can apply the $\psi_1$-version of Bernstein's inequality (\Cref{lem:bernstein} (b)), which implies that \eqref{eq:ProofMax1} needs to be replaced by
\begin{equation*}
\P\bigl(\abs{\langle \epsilon_1X_{1}+\ldots+\epsilon_nX_{n},\theta \rangle}> p c L_K n\bigr)\le 2\exp(-p n/6),
\end{equation*}
for some parameter \(p>1\) to be chosen later.
Taking the union bound, we get
\begin{equation*}
\P\Bigl(\,\max_{\epsilon\in\{-1,+1\}^n} 	\abs{\langle \epsilon_1X_{1}+\ldots+\epsilon_nX_{n},\theta \rangle}>pc L_K n\Bigr)\le\exp\big((n+1)\log 2-pn/6\big).
\end{equation*}
Consider now a \(\tfrac12\text{-net}\) \(\mathcal{N}\) of \(\mathbb{S}^{n-1}\) with cardinality at most $5^n$ (the existence of such a net is ensured by \cite[Lemma 5.2.5]{AsymGeomAnalyBook}, for example). Applying the union bound once more leads to
\begin{equation*}
\P\Bigl(\max_{\theta\in\mathcal{N}}\max_{\epsilon\in\{-1,+1\}^n} 	\abs{\langle \epsilon_1X_{1}+\ldots+\epsilon_nX_{n},\theta \rangle}>pcL_K n\Bigr)\le\exp\big((n+1)\log 2+n\log 5 -pn/6\big).
\end{equation*}
For any \(\theta\in\mathbb{S}^{n-1}\) there exist a sequence \((\theta_j)_{j\in\N}\in\mathcal{N}^\N \) and coefficients \(\delta_j\in[0,2^{1-j}]\) such that \(\theta=\sum_{j=1}^\infty \delta_j\theta_j \)  (see \cite{A}). In particular, this implies
\begin{equation*}
\begin{split}
\P\Bigl(\max_{\theta\in\mathbb{S}^{n-1}}&\max_{\epsilon\in\{-1,+1\}^n} 	\abs{\langle \epsilon_1X_{1}+\ldots+\epsilon_nX_{n},\theta \rangle}>2pc L_K n\Bigr)\\
&\le \P\Bigl(\max_{\theta\in\mathbb{S}^{n-1}}\max_{\epsilon\in\{-1,+1\}^n} \sum_{j=1}^\infty \delta_j\abs{\langle\epsilon_1X_{1}+\ldots+\epsilon_nX_{n},\theta_j \rangle}>2pc L_K n\Bigr)\\
&\le   \P\Bigl(\max_{\theta\in\mathcal{N}}\max_{\epsilon\in\{-1,+1\}^n}\abs{\langle\epsilon_1X_{1}+\ldots+\epsilon_nX_{n},\theta_j \rangle}>pc L_K n\Bigr)\\
& \le\exp\big((n+1)\log 2+n\log 5 -pn/6\big).
\end{split}
\end{equation*}
Notice that 
\begin{equation*}
	\max_{\theta\in\mathbb{S}^{n-1}}\abs{\langle \epsilon_1X_{1}+\ldots+\epsilon_nX_{n},\theta \rangle}=\norm{ \epsilon_1X_{1}+\ldots+\epsilon_nX_{n}}_2.
\end{equation*}
Hence, applying a union bound and taking $p\coloneqq42\log(2N/n)$, \eqref{eq:ProofMax3} gets replaced by
		\begin{align*}
		\P\biggl(\max_{\{X_{i_1},\ldots,X_{i_n}\}\subset\{\pm X_1,\ldots,\pm X_N\} }\max_{\epsilon\in\{-1,+1\}^n} 	&\norm{\epsilon_1X_{i_1}+\ldots+\epsilon_nX_{i_n}}_2>84c L_K n\,\log(2N/n)\biggr)\\
		&\le\exp\big(-n\log(2N/n)\big).
		\end{align*}
This completes the proof.
\end{proof}

\begin{proof}[Proof of \Cref{thm:IsotropicConstantGeneralK}]
Again, the proof follows closely the one of Theorem \ref{thm:IsotropicConstant} and we shall indicate the necessary modifications.

The regime where $N\leq cn$ is trivial by \Cref{rem:N<cn}. Next, as long as $N\leq e^{\sqrt{n}}$ we combine this time \Cref{prop:1NormBoundGeneral} with \Cref{lem:facets} (b) to see that \eqref{eq:ProofThm11} gets replaced by
\begin{equation*}
\oneover{\abs{K_N}}\int_{K_N}\norm{x}_2^2\de x\le 2 C^2 L_K^2 \log(2N/n)^2,
\end{equation*}
which holds with probability greater than \(1-\exp(-c_1n)\), where $C\in(0,\infty)$ is an absolute constant. Combining this with \Cref{lem:boundLK} (b) and \Cref{lem:volrad} (b), we deduce that \eqref{eq:ProofThm12} has to be replaced by
\begin{align*}
			L^2_{K_N}&\le\oneover{n\abs{K_N}^{2/n}}	\oneover{\abs{K_N}}\int_{K_N}\norm{x}_2^2\de x \leq \frac{1} {n}\frac{n}{c_2^2 \log(2N/n) L_K^2}C^2 L_K^2\log(2N/n)^2 \leq\frac{C^2}{c_2^2}\log(2N/n),
\end{align*}
which holds with probability greater than  \(1-c_3\exp(-c_4n)-\exp(-c_5\sqrt{N})\). 
The proof is thus complete. 
\end{proof}

\subsection*{Acknowledgement}

The authors would like to thank Beatrice-Helen Vritsiou (Edmonton), Apostolos Giannopoulos (Athens) and David Alonso-Guti\'errez (Zaragoza) for a number of stimulating discussions on the topic of this paper.

JP has been supported by a \textit{Visiting International Professor Fellowship} from the Ruhr University Bochum and NT by the German Research Foundation (DFG) via Research Training Group RTG 2131 \textit{High dimensional Phenomena in Probability -- Fluctuations and Discontinuity}.
	
\printbibliography

\vspace{1cm}

\footnotesize

\textsc{Joscha Prochno:} School of Mathematics \& Physical Sciences, University of Hull, United Kingdom\\
\textit{E-mail}: \texttt{j.prochno@hull.ac.uk}

\bigskip

\textsc{Christoph Th\"ale:} Faculty of Mathematics, Ruhr University Bochum, Germany\\
\textit{E-mail}: \texttt{christoph.thaele@rub.de}

\bigskip

\textsc{Nicola Turchi:}  Faculty of Mathematics, Ruhr University Bochum, Germany\\
\textit{E-mail}: \texttt{nicola.turchi@rub.de}
\end{document}